\newtheorem{example}{Example}[section] 
\title{Strong convergence order for the scheme of fractional diffusion equation driven by fractional Gaussion noise
\thanks{This work was supported by the National Natural Science Foundation of China under
	Grant No. 11671182, and the AI and Big Data Funds under Grant No. 2019620005000775.
}}
\author{Daxin Nie\footnotemark[2]\and Jing Sun\footnotemark[2]
\and Weihua Deng\footnotemark[2]\thanks{ School of Mathematics and Statistics, Gansu Key Laboratory of Applied Mathematics and Complex Systems, Lanzhou University, Lanzhou 730000, P.R. China (Email: dengwh@lzu.edu.cn).}
}
\begin{document}

\maketitle

\begin{abstract}
Fractional Gaussian noise models the time series with long-range dependence; when the Hurst index $H>1/2$, it has positive correlation reflecting a persistent autocorrelation structure. This paper studies the numerical method for solving stochastic fractional diffusion equation driven by fractional Gaussian noise. Using the operator theoretical approach, we present the regularity estimate of the mild solution and the fully discrete scheme with finite element approximation in space and backward Euler convolution quadrature in time. The $\mathcal{O}(\tau^{H-\rho\alpha})$ convergence rate in time and $\mathcal{O}(h^{\min(2,2-2\rho,\frac{H}{\alpha})})$ in space are obtained, showing the relationship between the regularity of noise and convergence rates, where $\rho$ is a parameter to measure the regularity of noise and $\alpha\in(0,1)$. Finally, numerical experiments are performed to support the theoretical results.

\end{abstract}
\begin{keywords}
Stochastic fractional diffusion equation, fractional Gaussian noise, finite element method, convolution quadrature, error analysis.
\end{keywords}

\begin{AMS}
35R11, 65M60, 65M12, 65F08
\end{AMS}

\pagestyle{myheadings}
\thispagestyle{plain}
\markboth{D. X. Nie, J. Sun and W. H. Deng}{CONVERGENCE FOR THE SCHEME OF FRACTIONAL DIFFUSION EQUATION}

\section{Introduction}

In some practical environments, external noises can not be ignored. Long-range dependence is observed in a wide range of fields, some of the time also accompanying self-similar properties, i.e., being invariant to changes of scale in time (or space).  Fractional Brownian motion (fBm) is the only self-similar continuous-time Gaussian process with stationary increments and long-range dependency structure \cite{Deng_2020}. It is built as a moving average of $dW(t)$ (white noise), in which past increments of $W(t)$ are weighted by the kernel $(t-t^\prime)^{H-1/2} $ and $H$ is a parameter satisfying $0<H<1$. This paper is concerned with fractional Gaussian noise (formal derivative of fBm) with persistent autocorrelation structure, i.e., $H>1/2$. The original unperturbed equation considered in this paper is for describing the anomalous diffusion with power-law waiting time.

More specifically, we study the numerical method for solving stochastic time-fractional diffusion equation driven by fractional Gaussian noise (external noise).
%
%
%
%
%
%
%
Let $D\subset \mathbb{R}^{d}$, $(d=1,2,3)$, be a convex polygonal domain and $G$ the solution of
\begin{equation}\label{equretosol0}
\left \{
\begin{split}
&\partial_{t} G+\!_0\partial^{1-\alpha}_tA G
=\dot{W}^{H}_{Q} \,~\qquad\quad {\rm in}\ D,\ t\in[0,T],\\
&G(\cdot,0)=G_0 \,\qquad\qquad\qquad\qquad {\rm in}\ D,\\
&G=0 \qquad\qquad\qquad\qquad\qquad\ \ \, {\rm on}\ \partial D,\ t\in[0,T],
\end{split}
\right .
\end{equation}
where $A=-\Delta$ with a zero Dirichlet boundary condition and its domain $\mathcal{D}(A)=H^{1}_{0}(D)\cap H^{2}(D)$; $\alpha\in(0,1)$; $\dot{W}^{H}_{Q}$ is fractional Gaussian noise and $W^{H}_{Q}$ is fractional Gaussian process with a covariance operator $Q$ (a self-adjoint, nonnegative, linear operator on $\mathbb{H}=L^{2}(D)$)  on a complete filtered probability space $(\Omega,\mathcal{F},\mathbb{P},\{\mathcal{F}_{t}\}_{t\geq 0})$ and Hurst index  $H\in (1/2,1)$; we assume that $A^{-\rho}Q^{1/2}$ with $\rho$ being a real number is a bounded operator in $\mathbb{H}$; and $\!_0\partial^{1-\alpha}_t$ is the Riemann-Liouville fractional derivative with $\alpha\in(0,1)$ defined by \cite{Podlubny_1999}
\begin{equation}
_{0}\partial^{1-\alpha}_tG=\frac{1}{\Gamma(\alpha)}\frac{\partial}{\partial t}\int^t_{0}(t-\xi)^{\alpha-1}G(\xi)d\xi.
\end{equation}


 In recent years, numerical method for the stochastic fractional diffusion equation driven by Gaussian noise has gained widespread concerns \cite{gunzburger_2018,gunzburger_2019,jin2019-1,kovacs_2014,kovacs_20142,li_2017,wu_2020}.
References \cite{kovacs_2014,kovacs_20142} present strong and weak convergence rates for a linear stochastic volterra type evolution equation driven
by an additive Gaussian noise. The works \cite{gunzburger_2018,gunzburger_2019} use the convolution quadrature and finite element method to discretize the stochastic time-fractional partial differential equations subject to additive space-time white noise. Reference \cite{jin2019-1} considers a fractional diffusion equation driven by the fractionally integrated Gaussian noise. The work \cite{li_2017} investigates the Galerkin finite element approximations for the stochastic space-time
fractional wave equations with an infinite dimensional additive noise.
As for the numerical approximations of the time fractional partial differential equations driven by fractional Gaussian noise, there are relatively few studies.
 Reference \cite{Li_2019} presents the Galerkin finite element semi-discrete scheme for semilinear stochastic time-tempered fractional wave equations driven by multiplicative Gaussian noise and fractional Gaussian noise and provides the error analyses when the noise is regular enough.

 In this paper, we use backward Euler convolution quadrature and finite element method to discretize the time fractional derivative and the space operator, respectively. Different from the Gaussian noise, fractional Gaussian noise is more regular in time, that is, the trajectory of fractional Brownian motion belongs to $C^{H}([0,T])$, so the corresponding estimate depends on the Hurst index $H$. But the approach adopted in \cite{Li_2019,Liu_2020} can't reflect the influence of Hurst index on regularity of the solutions and convergence rates.  Here, we take the appropriate weighted function and combine the operator theoretical approach to get the regularity estimate for the solution, i.e., $	(\mathbb{E}\|A^{\sigma}G\|_{\mathbb{H}}^{2})^{1/2}\leq C+Ct^{-(\sigma-q)\alpha}\|G_{0}\|_{\hat{H}^{q}(D)}$, $q\leq\sigma=\min(1,1-\rho,\frac{H}{\alpha}-\rho-\epsilon)$, and $	\left (\mathbb{E}\left \|\frac{G(t)-G(t-\tau)}{\tau^{\gamma}}\right \|_{\mathbb{H}}^{2}\right )^{1/2}\leq C+Ct^{-\gamma}\|G_{0}\|_{\mathbb{H}}$ with $\gamma<H-\rho\alpha$; see Theorem \ref{thmsobo}. Then, an $\mathcal{O}(\tau^{H-\alpha\rho})$ $(\rho\in[0,H/\alpha))$ convergence rate in time and an $\mathcal{O}(h^{\min(2,2-2\rho,\frac{2H}{\alpha}-2\rho)})$ $(\rho\in[-1,1/2])$ convergence rate in space can be got by skillful analyses; see Theorems \ref{thmGtime} and \ref{ThmGspace}. From the analyses, we show the relationship between the regularity of noise and convergence rates of the numerical scheme.

The paper is organized as follows. In the next section, we introduce some notations and facts on fractional Brownian motion, and then discuss the regularity of the mild solution of Eq. \eqref{equretosol0}. In Section 3, we use the backward Euler method to discretize the time fractional derivative and provide the error estimates for the semidiscrete scheme. Finite element method is used to discretize space operator and error estimates for the fully discrete scheme are proposed in Section 4. We confirm the  theoretically predicted convergence order by numerical examples in Section 5. The paper is concluded with some discussions in the last section. Throughout the paper, $C$ denotes a generic positive constant, whose value may differ at each occurrence.

\section{Regularity of the solution}
\subsection{Preliminaries}
 We first introduce some notations.
  Let  $ {(\varkappa_{j},\varphi_j)} $ be $A$'s eigenvalues ordered non-decreasingly and the corresponding eigenfunctions normalized in the $ L^2(D) $ norm, where $A=-\Delta$ with a zero Dirichlet boundary condition.  For any $ q\in \mathbb{R} $, denote the space $ \hat{H}^q(D)=\mathcal{D}(A^{q/2})$ \cite{Thomee_2006} with the norm given by
	\begin{equation*}
		|\mu|^2_{\hat{H}^q(D)}=\|A^{q/2}\mu\|_{L^2(D)}.
	\end{equation*}
	Thus $ \hat{H}^0(D)=L^2(D) $, $\hat{H}^1(D)=H^1_0(D)$, and $\hat{H}^2(D)=H^2(D)\bigcap H^1_0(D)$.
Below we  use the notation `$\tilde{~}$' for taking Laplace transform and let $\epsilon > 0$ be arbitrarily small.

Then we introduce some preliminary facts on fractional Brownian motion, which one can refer to \cite{Mishura_2008,Kloeden_1995,Prato_2014}. Denote $\mathcal{L}(\mathbb{U};\mathbb{V})$ as the Banach space of all bounded linear operators $\mathbb{U}\rightarrow \mathbb{V}$, where $\mathbb{U}$ and $\mathbb{V}$ are two separable Hilbert spaces with norms $\|\cdot\|_{\mathbb{U}}$, $\|\cdot\|_{\mathbb{V}}$, and inner products $(\cdot,\cdot)_{\mathbb{U}}$ and $(\cdot,\cdot)_{\mathbb{V}}$. We introduce  $\mathcal{L}_{2}(\mathbb{U};\mathbb{V}) \,(\subset \mathcal{L}(\mathbb{U};\mathbb{V}))$, which consists of all Hilbert-Schmidt operators with norm and inner product
\begin{equation*}
	\|T\|^{2}_{\mathcal{L}_{2}(\mathbb{U},\mathbb{V})}=\sum_{j\in \mathbb{N}}\|T\mu_{j}\|^{2}_{\mathbb{V}},\quad \langle S,T \rangle_{\mathcal{L}_{2}(\mathbb{U},\mathbb{V})}=\sum_{j\in\mathbb{N}}( S\mu_{j},T\mu_{j})_{\mathbb{V}},\quad S,T\in \mathcal{L}_{2}(\mathbb{U},\mathbb{V}),
\end{equation*}
where $\{\mu_{j}\}_{j\in \mathbb{N}}$ is the orthonormal basis in $\mathbb{U}$ and the above definitions are independent of the specific choice of orthonormal bases.
Let $\mathbb{H}=L^{2}(D)$ with inner product $(\cdot,\cdot)$ and covariance operator $Q$ be a  self-adjoint, nonnegative linear operator on $\mathbb{H}$. We abbreviate $\|\cdot\|_{\mathcal{L}(\mathbb{H},\mathbb{H})}$ as $\|\cdot\|$. Denote $\mathbb{H}_{0}=Q^{1/2}(\mathbb{H})$ with inner product $( \mu,\nu)_{\mathbb{H}_{0}}=( Q^{-1/2}\mu,Q^{-1/2}\nu)$ for $\mu,\nu\in\mathbb{H}_{0}$ and abbreviate $\langle \cdot,\cdot \rangle_{\mathcal{L}_{2}(\mathbb{H},\mathbb{H})}$, $\mathcal{L}_{2}(\mathbb{H},\mathbb{H})$ and $\mathcal{L}_{2}(\mathbb{H}_{0},\mathbb{H})$ as  $\langle \cdot,\cdot \rangle$, $\mathcal{L}_{2}$ and $\mathcal{L}_{2}^{0}$. The  fractional Brownian motion (fBm) with covariance operator $Q$ can be represented as
\begin{equation*}
W^{H}_{Q}(x,t)=\sum_{k=1}^{\infty}\sqrt{\varLambda_{k}}\phi_k(x)W^{H}_k(t),
\end{equation*}
where $(\phi_k)_{k\in\mathbb{N}}$ denotes an orthonormal basis of $\mathbb{H}$ consisting of the eigenfunctions of $Q$ with corresponding eigenvalues $(\varLambda_k)_{k\in\mathbb{N}}$ and $W^{H}_k$, $k=1,2,\ldots$ are independent one-dimensional fBm process with Hurst index $H$. We denote $W^{H}_{Q}(t)$ as $W^{H}_{Q}(x,t)$ and $\mathbb{E}$ as expectation in the following.

Furthermore, for $\kappa>0$ and $\pi/2<\theta<\pi$, we define sectors 
\begin{equation*}
\begin{aligned}
&\Sigma_{\theta}=\{z\in\mathbb{C}:z\neq 0,|\arg z|\leq \theta\},\ \Sigma_{\theta,\kappa}=\{z\in\mathbb{C}:|z|>\kappa,|\arg z|\leq \theta\},\\
\end{aligned}
\end{equation*}
and 
the contour $\Gamma_{\theta,\kappa}$ by
\begin{equation*}
\Gamma_{\theta,\kappa}=\{r e^{-\mathbf{i}\theta}: r\geq \kappa\}\bigcup\{\kappa e^{\mathbf{i}\psi}: |\psi|\leq \theta\}\bigcup\{r e^{\mathbf{i}\theta}: r\geq \kappa\},
\end{equation*}
where the circular arc is oriented counterclockwise and the two rays are oriented with an increasing imaginary part and $\mathbf{i}^2=-1$.

For one-dimensional fBm, the following holds.
\begin{lemma}[\cite{Mishura_2008,Kloeden_1995}]\label{Lemitoeq}
For $H>1/2$ and $f(t),g(t)\in L^2([0,T])$, we have
\begin{equation*}
\mathbb{E} \int_{0}^{t} f(s) d W^{H}(s)=0
\end{equation*}
and
\begin{equation*}
\mathbb{E} \left [\int_{0}^{t} f(s) d W^{H}(s) \int_{0}^{t} g(s) d W^{H}(s)\right ]=H(2 H-1) \int_{0}^{t} \int_{0}^{t} f(s) g(r)|s-r|^{2 H-2} d r d s,
\end{equation*}
where $W^H$ means one-dimensional fBm.
\end{lemma}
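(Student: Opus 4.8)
The plan is to prove both identities first for simple (step) functions and then extend to arbitrary $f,g\in L^2([0,T])$ by density, using that $f\mapsto\int_0^t f\,dW^H$ is an isometry for the relevant norms. For a step function $f=\sum_j a_j\mathbf{1}_{(t_j,t_{j+1}]}$ with $0\le t_0<t_1<\cdots\le t$ one has, by definition of the Wiener integral, $\int_0^t f\,dW^H=\sum_j a_j\bigl(W^H(t_{j+1})-W^H(t_j)\bigr)$. Since one-dimensional fBm is a centered Gaussian process, every increment has mean zero, so $\mathbb{E}\int_0^t f\,dW^H=0$ for step functions, and this passes to the $L^2$-limit, giving the first identity.

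For the covariance identity it suffices, by bilinearity, to treat $f=\mathbf{1}_{(0,u]}$ and $g=\mathbf{1}_{(0,v]}$ with $u,v\le t$. Then the left-hand side is the fBm covariance $\mathbb{E}\bigl[W^H(u)W^H(v)\bigr]=\tfrac12\bigl(u^{2H}+v^{2H}-|u-v|^{2H}\bigr)$, and one checks the elementary identity
\begin{equation*}
\tfrac12\bigl(u^{2H}+v^{2H}-|u-v|^{2H}\bigr)=H(2H-1)\int_0^u\!\int_0^v|s-r|^{2H-2}\,dr\,ds
\end{equation*}
by observing that both sides vanish when $u=0$ or $v=0$ and that applying $\partial_u\partial_v$ to either side produces $H(2H-1)|u-v|^{2H-2}$; the double integral is finite precisely because $H>1/2$ forces $2H-2>-1$. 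Hence the claimed formula holds for all step functions $f,g$.

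The remaining step is the density extension, which is the only place requiring care. For $H>1/2$ the symmetric bilinear form $\langle f,g\rangle_H:=H(2H-1)\int_0^t\!\int_0^t f(s)g(r)|s-r|^{2H-2}\,dr\,ds$ is continuous on $L^2([0,T])$: by a Hardy--Littlewood--Sobolev inequality $|\langle f,g\rangle_H|\le C\|f\|_{L^{1/H}}\|g\|_{L^{1/H}}\le C'\|f\|_{L^2}\|g\|_{L^2}$, since $1/H<2$ on the bounded interval. On step functions $\|f\|_H^2=\mathbb{E}\bigl|\int_0^t f\,dW^H\bigr|^2$, so the Wiener integral extends to an isometry into $L^2(\Omega)$; approximating $f,g\in L^2([0,T])$ by step functions $f_n,g_n$ in $L^2$ (hence also in the $\|\cdot\|_H$-norm by the estimate above) and passing to the limit in the step-function identity — using continuity of $\langle\cdot,\cdot\rangle_H$ on $L^2$ and of the inner product on $L^2(\Omega)$ — yields both identities for general $f,g$. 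The main obstacle is exactly this simultaneous convergence of the approximations in the $L^2$ and $\|\cdot\|_H$ senses, so that both sides of the covariance identity converge to the asserted limits; the continuity estimate makes it automatic, and no further work is needed.
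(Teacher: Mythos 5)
The paper offers no proof of this lemma at all: it is imported verbatim, with a citation to Mishura and to Kloeden et al., as a known property of the Wiener integral with respect to one-dimensional fBm. So there is nothing in the paper to compare your argument against line by line; the relevant question is only whether your reconstruction is sound, and it is. The three ingredients are exactly the standard ones: (i) for step functions the integral is a finite linear combination of centered Gaussian increments, giving the mean-zero identity; (ii) the kernel identity
\begin{equation*}
\tfrac12\bigl(u^{2H}+v^{2H}-|u-v|^{2H}\bigr)=H(2H-1)\int_0^u\!\!\int_0^v|s-r|^{2H-2}\,dr\,ds ,
\end{equation*}
which your mixed-partial-derivative check establishes correctly (the boundary terms vanish on the axes, and $|s-r|^{2H-2}$ is integrable precisely because $H>1/2$); and (iii) the density extension. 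You correctly identified that (iii) is the only delicate point: one must know that $L^2$-convergence of step-function approximants forces convergence in the $\|\cdot\|_H$-norm, so that both sides of the covariance identity pass to the limit simultaneously. Your Hardy--Littlewood--Sobolev bound $|\langle f,g\rangle_H|\le C\|f\|_{L^{1/H}}\|g\|_{L^{1/H}}\le C'\|f\|_{L^2}\|g\|_{L^2}$ (valid on a bounded interval since $1/H<2$) settles this, and is exactly the mechanism used in the cited references to show that $L^{1/H}([0,T])\supset L^2([0,T])$ embeds into the domain of the fBm Wiener integral. I see no gap.
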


 Then, we provide a lemma which plays an important role in the proofs of this paper.

\begin{lemma}\label{eqcorlem}
Let	$f(t),g(t)\in \mathcal{L}^{0}_{2}$, and $H>1/2$. We have
\begin{equation*}
\begin{aligned}
&\mathbb{E} \left (\int_{0}^{t} f(s) d W^{H}_{Q}(s), \int_{0}^{t} g(s) d W^{H}_{Q}(s)\right )=\\
&\qquad \qquad\qquad \qquad\qquad \quad H(2H-1) \int_{0}^{t}\int_{0}^{t} \langle f(s)Q^{1/2},g(r)Q^{1/2}\rangle|r-s|^{2H-2}drds,
\end{aligned}
\end{equation*}
	where $(\cdot,\cdot)$ denotes $L^{2}$ inner product.
\end{lemma}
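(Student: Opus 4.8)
The plan is to reduce the vector-valued (Hilbert-space-valued) covariance identity to the scalar one from Lemma \ref{Lemitoeq} by expanding everything in the eigenbasis of $Q$. First I would write $W^H_Q(s)=\sum_{k=1}^\infty\sqrt{\varLambda_k}\phi_k W^H_k(s)$ as in the preliminaries, so that the stochastic integrals become
\begin{equation*}
\int_0^t f(s)\,dW^H_Q(s)=\sum_{k=1}^\infty\sqrt{\varLambda_k}\int_0^t f(s)\phi_k\,dW^H_k(s),
\end{equation*}
and similarly for $g$. Then I would take the $L^2$ inner product of these two sums and apply $\mathbb{E}$. Because the one-dimensional fBms $W^H_k$ and $W^H_l$ are independent for $k\neq l$, the cross terms vanish after taking expectation (here one uses the first identity of Lemma \ref{Lemitoeq} together with independence, so that $\mathbb{E}[\int f_k\,dW^H_k\int g_l\,dW^H_l]=0$ for $k\neq l$); only the diagonal terms $k=l$ survive.

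Next I would apply the scalar It\^o-type isometry of Lemma \ref{Lemitoeq} to each diagonal term: for fixed $k$,
\begin{equation*}
\mathbb{E}\left[\int_0^t (f(s)\phi_k,\varphi)\,dW^H_k(s)\int_0^t (g(r)\phi_k,\varphi')\,dW^H_k(r)\right]
\end{equation*}
would be handled componentwise against an orthonormal basis of $\mathbb{H}$, giving the factor $H(2H-1)\int_0^t\int_0^t |s-r|^{2H-2}\,dr\,ds$ times the relevant scalar products. Summing over the basis of $\mathbb{H}$ and over $k$ (with the weights $\varLambda_k$) and recognizing that $\sum_k \varLambda_k (f(s)\phi_k,\cdot)(g(r)\phi_k,\cdot)$ reassembles into $\langle f(s)Q^{1/2},g(r)Q^{1/2}\rangle_{\mathcal{L}_2}$ — using $Q^{1/2}\phi_k=\sqrt{\varLambda_k}\phi_k$ and the definition of the Hilbert-Schmidt inner product — yields the claimed formula. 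I would push the $\mathbb{E}$, the basis sum, and the double time integral past each other using Fubini/Tonelli, which is legitimate because $f,g\in\mathcal{L}_2^0$ guarantees the relevant quantities are finite.

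The main obstacle is the interchange of the infinite sums (over $k$ and over the $\mathbb{H}$-basis) with the expectation and the double time integral, i.e., justifying absolute convergence so that Fubini applies and the cross terms really drop out; this is where the hypothesis $f,g\in\mathcal{L}_2^0$ is essential, since it controls $\sum_k \varLambda_k\|f(s)\phi_k\|_{\mathbb{H}}^2=\|f(s)Q^{1/2}\|_{\mathcal{L}_2}^2$. A secondary technical point is to confirm that the definitions are independent of the chosen orthonormal bases, but this is already noted in the preliminaries for $\mathcal{L}_2$, so it can be invoked directly. Once the bookkeeping of the double series is under control, the identity follows by a clean reduction to the one-dimensional case.
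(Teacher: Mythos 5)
Your proposal is correct and follows essentially the same route as the paper: expand $W^H_Q$ in the eigenbasis of $Q$, kill the off-diagonal terms by independence of the $W^H_k$, apply the scalar isometry of Lemma \ref{Lemitoeq} to each diagonal term, and reassemble the sum over $k$ into the Hilbert--Schmidt inner product $\langle f(s)Q^{1/2},g(r)Q^{1/2}\rangle$. Your version is in fact slightly more careful than the paper's (which treats the $\mathbb{H}$-valued integrands as if scalar and does not comment on the interchange of sums, expectation, and integrals), but the underlying argument is identical.
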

 \begin{proof}
Using Lemma \ref{Lemitoeq} and the definition of $W^{H}_{Q}$, one has
\begin{equation*}
	\begin{aligned}
		&\mathbb{E} \left (\int_{0}^{t} f(s) d W^{H}_{Q}(s), \int_{0}^{t} g(s) d W^{H}_{Q}(s)\right )\\
		= &\sum_{k=1}^{\infty}\mathbb{E} \left (\int_{0}^{t} f(s)\sqrt{\varLambda_{k}}\phi_k(x) d W^{H}_{k}(s), \int_{0}^{t} g(s)\sqrt{\varLambda_{k}} \phi_k(x)d W^{H}_{k}(s)\right )\\
		= &\sum_{k=1}^{\infty}\mathbb{E} \left [\int_{0}^{t} f(s)\sqrt{\varLambda_{k}}d W^{H}_{k}(s) \int_{0}^{t} g(s)\sqrt{\varLambda_{k}}d W^{H}_{k}(s)\right ]\\
		= &H(2H-1)\sum_{k=1}^{\infty} \int_{0}^{t}\int_{0}^{t} f(s)\sqrt{\varLambda_{k}}g(r)\sqrt{\varLambda_{k}}|r-s|^{2H-2}drds  \\
		= &H(2H-1)\sum_{k=1}^{\infty} \int_{0}^{t}\int_{0}^{t} \left (f(s)\sqrt{\varLambda_{k}}\phi_k(x),g(r)\sqrt{\varLambda_{k}}\phi_k(x)\right )|r-s|^{2H-2}drds  \\
		= &H(2H-1) \int_{0}^{t}\int_{0}^{t} \langle f(s)Q^{1/2},g(r)Q^{1/2}\rangle|r-s|^{2H-2}drds.
	\end{aligned}
\end{equation*}

\end{proof}

\subsection{A priori estimate of the solution}
It is easy to see that the solution of Eq. \eqref{equretosol0} can be decomposed into the solution of the deterministic problem
\begin{equation}\label{equretosol1}
\left \{
\begin{split}
&\partial_{t} v+\!_0\partial^{1-\alpha}_tAv=0
 \qquad\quad\quad~~\, {\rm in}\ D,\ t\in[0,T],\\
&v(\cdot,0)=G_0 \qquad\qquad\qquad\qquad {\rm in}\ D,\\
&v=0 \qquad\qquad\qquad\qquad\qquad\ \ \, {\rm on}\ \partial D,\ t\in[0,T],
\end{split}
\right .
\end{equation}
plus the solution of the stochastic problem
\begin{equation}\label{equretosol}
\left \{
\begin{split}
&\partial_{t} u+\!_0\partial^{1-\alpha}_tAu
=\dot{W}^{H}_{Q} \quad\quad~~ {\rm in}\ D,\ t\in[0,T],\\
&u(\cdot,0)=0 \qquad\qquad\qquad\qquad {\rm in}\ D,\\
&u=0 \qquad\quad\qquad\qquad\qquad\ \ \, {\rm on}\ \partial D,\ t\in[0,T].
\end{split}
\right .
\end{equation}
As for Eq. \eqref{equretosol1}, there have been many discussions; see, e.g., those  in \cite{cuesta_2006-1,jin2016-1,jin_2019_2,lin_2007,mclean_2015,sakamoto_2011}. So in this paper, we mainly focus on Eq. \eqref{equretosol}. 
To get the representation of the mild solution of Eq. \eqref{equretosol}, we introduce the operator 
\begin{equation*}
	E(t)=\frac{1}{2\pi\mathbf{i}}\int_{\Gamma_{\theta,\kappa}}e^{zt}z^{\alpha-1}(z^{\alpha}+A)^{-1}dz.
\end{equation*}
Thus by taking Laplace and inverse Laplace transforms, the mild solution of Eq. \eqref{equretosol} can be written as
\begin{equation}\label{eqrepsol}
u=\int_{0}^{t}E(t-s)dW^{H}_{Q}(s).
\end{equation}
According to the resolvent estimate $\|(z+A)^{-1}\|\leq C|z|$ for $z\in \Sigma_{\theta}$ \cite{lubich_1996}, the following estimates hold
\begin{equation*}
	\begin{aligned}
	&\|\tilde{E}(z)\|\leq C|z|^{-1}\quad \forall z\in \Sigma_{\theta},\\
	&\|A\tilde{E}(z)\|\leq C|z|^{\alpha-1}\quad \forall z\in \Sigma_{\theta},
	\end{aligned}
\end{equation*}
where $\tilde{E}(z)$ means the Laplace transform of $E$. Using interpolation property leads to
\begin{equation}\label{equresolvent}
	\|A^{\beta}\tilde{E}(z)\|\leq C|z|^{\beta\alpha-1}\quad \forall z\in \Sigma_{\theta}~{\rm and }~ \beta\in [0,1].
\end{equation}
According to the above estimates, the spatial regularity estimate of $u$ can be obtained.
\begin{theorem}\label{thmsobo}
	Let $u$ be the mild solution of Eq. $\eqref{equretosol}$ and $\|A^{-\rho}\|_{\mathcal{L}^{0}_{2}}<\infty$ with $\rho<\frac{H}{\alpha}$, where $\alpha\in(0,1)$. Then we have
	\begin{equation*}
		\mathbb{E}\|A^{\sigma}u\|^{2}_{\mathbb{H}}\leq C,
	\end{equation*}
	where $\sigma\leq\min(1-\rho,\frac{H}{\alpha}-\rho-\epsilon)$.
\end{theorem}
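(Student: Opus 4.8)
The plan is to compute $\mathbb{E}\|A^{\sigma}u\|_{\mathbb{H}}^{2}$ directly from the representation \eqref{eqrepsol} by means of the covariance identity in Lemma \ref{eqcorlem}, and then to control the resulting space--time double integral by combining the resolvent bound \eqref{equresolvent} with a weakly singular kernel estimate.

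First I would insert $A^{\sigma}$ under the stochastic integral in \eqref{eqrepsol} and apply Lemma \ref{eqcorlem} with $f(s)=g(s)=A^{\sigma}E(t-s)$, obtaining
\begin{equation*}
\mathbb{E}\|A^{\sigma}u\|_{\mathbb{H}}^{2}=H(2H-1)\int_{0}^{t}\int_{0}^{t}\langle A^{\sigma}E(t-s)Q^{1/2},A^{\sigma}E(t-r)Q^{1/2}\rangle\,|r-s|^{2H-2}\,dr\,ds.
\end{equation*}
By the Cauchy--Schwarz inequality in $\mathcal{L}_{2}$ the integrand is bounded by $\|A^{\sigma}E(t-s)Q^{1/2}\|_{\mathcal{L}_{2}}\,\|A^{\sigma}E(t-r)Q^{1/2}\|_{\mathcal{L}_{2}}\,|r-s|^{2H-2}$. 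Since $A$ and $E(t-s)$ commute I would then factor $A^{\sigma}E(t-s)Q^{1/2}=A^{\sigma+\rho}E(t-s)\,A^{-\rho}Q^{1/2}$ and invoke the ideal property of Hilbert--Schmidt operators together with $\|A^{-\rho}\|_{\mathcal{L}_{2}^{0}}=\|A^{-\rho}Q^{1/2}\|_{\mathcal{L}_{2}}<\infty$ to get $\|A^{\sigma}E(t-s)Q^{1/2}\|_{\mathcal{L}_{2}}\leq C\,\|A^{\sigma+\rho}E(t-s)\|$.

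Next I would estimate $\|A^{\sigma+\rho}E(t-s)\|$ through the Bromwich integral $E(t-s)=\frac{1}{2\pi\mathbf{i}}\int_{\Gamma_{\theta,\kappa}}e^{z(t-s)}\tilde{E}(z)\,dz$. Since $\sigma\leq 1-\rho$ gives $\sigma+\rho\leq 1$, the bound \eqref{equresolvent} applies with $\beta=\sigma+\rho\in[0,1]$ (the case $\sigma+\rho<0$ only being easier, as $A^{\sigma+\rho}$ is then bounded); deforming the contour to $\Gamma_{\theta,1/(t-s)}$ and using $\|A^{\sigma+\rho}\tilde{E}(z)\|\leq C|z|^{(\sigma+\rho)\alpha-1}$ gives, by the standard estimate of such contour integrals, $\|A^{\sigma+\rho}E(t-s)\|\leq C(t-s)^{-(\sigma+\rho)\alpha}$. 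Writing $\beta:=(\sigma+\rho)\alpha$, this leads to
\begin{equation*}
\mathbb{E}\|A^{\sigma}u\|_{\mathbb{H}}^{2}\leq C\int_{0}^{t}\int_{0}^{t}(t-s)^{-\beta}(t-r)^{-\beta}\,|r-s|^{2H-2}\,dr\,ds.
\end{equation*}

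Finally, the substitution $s\mapsto t-s$, $r\mapsto t-r$ followed by the rescaling $s\mapsto ts$, $r\mapsto tr$ turns the right-hand side into $Ct^{2H-2\beta}\int_{0}^{1}\int_{0}^{1}s^{-\beta}r^{-\beta}|r-s|^{2H-2}\,dr\,ds$; splitting the unit square into $\{r<s\}$ and $\{s<r\}$ and computing the inner integral via the Beta function shows this constant is finite exactly when $\beta<1$, $H>1/2$ and $\beta<H$. The first holds because $\beta=(\sigma+\rho)\alpha\leq\alpha<1$, the second is a standing hypothesis, and the third is precisely $\sigma<H/\alpha-\rho$, guaranteed by $\sigma\leq H/\alpha-\rho-\epsilon$; then $2H-2\beta>0$, so $\mathbb{E}\|A^{\sigma}u\|_{\mathbb{H}}^{2}\leq Ct^{2H-2\beta}\leq CT^{2H-2\beta}=:C$, which is the claim. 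I expect the main obstacle to be this last double integral: it carries a triple singularity---the two temporal factors $(t-s)^{-\beta}$, $(t-r)^{-\beta}$ inherited from $E$ and the weakly singular correlation kernel $|r-s|^{2H-2}$---and one must verify convergence on exactly the stated range of $\sigma$. This is where both the restriction $\sigma<H/\alpha-\rho$ and the hypothesis $H>1/2$ (positivity of the correlation) genuinely enter, and it is essential that the temporal singularity exponent produced by the resolvent/contour estimate is exactly $(\sigma+\rho)\alpha$; the contour estimate itself is routine once \eqref{equresolvent} is available.
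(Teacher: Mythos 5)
Your argument is correct and reaches the stated range of $\sigma$, but after the common opening (representation \eqref{eqrepsol}, Lemma \ref{eqcorlem}, factoring $A^{\sigma}E\,Q^{1/2}=A^{\sigma+\rho}E\cdot A^{-\rho}Q^{1/2}$, and the contour bound $\|A^{\sigma+\rho}E(t)\|\leq Ct^{-(\sigma+\rho)\alpha}$ from \eqref{equresolvent}) your treatment of the double integral genuinely differs from the paper's. The paper applies Cauchy--Schwarz only in the outer variable, with the weights $s^{2\alpha-1+\epsilon}$ and $(t-s)^{-2\alpha+1-\epsilon}$, splits the smoothing budget asymmetrically as $2\sigma=\sigma_{1}+\sigma_{2}$, and handles the inner integral $\int_{0}^{t-s}A^{\sigma_{2}}E(t-s-r)Q^{1/2}|r|^{2H-2}dr$ as a Laplace convolution, trading the kernel $|r|^{2H-2}$ for the symbol $z^{1-2H}$ on the contour. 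You instead bound the Hilbert--Schmidt inner product pointwise and symmetrically, collapsing everything to the scalar integral $\int_{0}^{t}\int_{0}^{t}(t-s)^{-\beta}(t-r)^{-\beta}|r-s|^{2H-2}\,dr\,ds$ with $\beta=(\sigma+\rho)\alpha$, which you evaluate exactly by rescaling and the Beta function; the three constraints $\beta<1$, $H>1/2$, $\beta<H$ then surface transparently and reproduce the admissible range $\sigma\leq\min(1-\rho,\tfrac{H}{\alpha}-\rho-\epsilon)$ (your strict inequality $\beta<H$ is in fact marginally sharper than the paper's $\epsilon$-formulation). Your route is the more elementary and self-contained one for this particular theorem; what the paper's weighted splitting and the $z^{1-2H}$ device buy is a uniform template that is reused almost verbatim in the later, harder estimates (Lemmas \ref{lemtime1} and \ref{lemtime2}, Theorem \ref{thmuspace}), where the operator kernels are no longer clean powers of $t-s$ and an exact Beta-function evaluation is unavailable. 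Your aside that the case $\sigma+\rho<0$ is easier because $A^{\sigma+\rho}$ is bounded is also correct and disposes of the only boundary case the symmetric argument needs.
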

\begin{proof}
	Taking the expectation of the square of the $\mathbb{H}$-norm of $A^{\sigma}u$ and using \eqref{equretosol} give
\begin{equation*}
\begin{aligned}
\mathbb{E}\|A^{\sigma}u\|^{2}_{\mathbb{H}}=\mathbb{E}\left \|\int_{0}^{t}A^{\sigma}E(t-s)dW^{H}_{Q}(s)\right \|^{2}_{\mathbb{H}}.
\end{aligned}
\end{equation*}
Using  Lemma \ref{eqcorlem} and Cauchy-Schwarz inequality and decomposing $2\sigma=\sigma_{1}+\sigma_{2}$ imply
\begin{equation*}
\begin{aligned}
\mathbb{E}\|A^{\sigma}u\|^{2}_{\mathbb{H}}\leq&CH(2H-1)\int_{0}^{t}\int_{0}^{t}\left \langle A^{\sigma}E(t-s)Q^{1/2},A^{\sigma}E(t-r)Q^{1/2}\right \rangle|r-s|^{2H-2}drds\\
\leq &CH(2H-1)\int_{0}^{t}\left \langle A^{\sigma_{1}}E(t-s)Q^{1/2},\int_{s}^{t}A^{\sigma_{2}}E(t-r)Q^{1/2}|r-s|^{2H-2}dr\right\rangle ds\\
\leq &CH(2H-1)\int_{0}^{t}\|A^{\sigma_{1}}E(t-s)Q^{1/2}\|_{\mathcal{L}_{2}}\\
&\qquad\qquad\qquad\cdot\left \|\int_{0}^{t-s}A^{\sigma_{2}}E(t-s-r)Q^{1/2}|r|^{2H-2}dr\right \|_{\mathcal{L}_{2}}ds\\
\leq &CH(2H-1)\left (\int_{0}^{t}s^{2\alpha-1+\epsilon}\|A^{\sigma_{1}}E(s)Q^{1/2}\|^{2}_{\mathcal{L}_{2}}ds\right )^{1/2}\\
&\cdot\left (\int_{0}^{t}(t-s)^{-2\alpha+1-\epsilon}\left\|\int_{0}^{t-s}A^{\sigma_{2}}E(t-s-r)Q^{1/2}|r|^{2H-2}dr\right\|_{\mathcal{L}_{2}}^{2}ds\right )^{1/2}\\
\leq &C\uppercase\expandafter{\romannumeral1}\cdot\uppercase\expandafter{\romannumeral2}.
\end{aligned}
\end{equation*}
Combining the boundedness of $\|A^{-\rho}Q^{1/2}\|_{\mathcal{L}_{2}}=\|A^{-\rho}\|_{\mathcal{L}^{0}_{2}}$, the resolvent estimate \eqref{equresolvent}, and simple calculations yield
\begin{equation*}
\begin{aligned}
\uppercase\expandafter{\romannumeral1}^{2}\leq&C\int_{0}^{t}s^{2\alpha-1+\epsilon}\left\|\int_{\Gamma_{\theta,\kappa}}e^{zs}z^{\alpha-1}A^{\sigma_{1}}(z^{\alpha}+A)^{-1}Q^{1/2}dz\right\|^{2}_{\mathcal{L}_{2}}ds\\
\leq&C\int_{0}^{t}s^{2\alpha-1+\epsilon}\left (\int_{\Gamma_{\theta,\kappa}}|e^{zs}||z|^{\alpha-1}\left\|A^{\sigma_{1}+\rho}(z^{\alpha}+A)^{-1}\right\|\left\|A^{-\rho}\right\|_{\mathcal{L}_{2}^{0}}|dz|\right )^{2}ds\\
\leq& C\int_{0}^{t}s^{2\alpha-1+\epsilon}\left (\int_{\Gamma_{\theta,\kappa}}|e^{zs}||z|^{\alpha-1}\left\|A^{\sigma_{1}+\rho}(z^{\alpha}+A)^{-1}\right\||dz|\right )^{2}ds\\
\leq& C\int_{0}^{t}s^{2\alpha-1+\epsilon}\left (\int_{\Gamma_{\theta,\kappa}}|e^{zs}||z|^{(\sigma_{1}+\rho)\alpha-1}|dz|\right )^{2}ds\\
\leq& C\int_{0}^{t}s^{2\alpha-1+\epsilon}\left (s^{-(\rho+\sigma_{1})\alpha}\right )^{2}ds\\
\leq& C t^{2\alpha-2(\rho+\sigma_{1})\alpha+\epsilon},\\
\end{aligned}
\end{equation*}
where we need to require $\alpha-(\rho+\sigma_{1})\alpha\geq 0$ to make $\uppercase\expandafter{\romannumeral1}$ bounded and $0\leq\sigma_{1}+\rho\leq 1$ to ensure $\left\|A^{\sigma_{1}+\rho}(z^{\alpha}+A)^{-1}\right\|\leq C|z|^{(\sigma_{1}+\rho-1)\alpha}$, implying $\sigma_{1}\in [-\rho,1-\rho]$. The estimate of $\uppercase\expandafter{\romannumeral2}$ can be got similarly
\begin{equation*}
\begin{aligned}
\uppercase\expandafter{\romannumeral2}^{2}\leq&C\int_{0}^{t}(t-s)^{-2\alpha+1-\epsilon}\left\|\int_{\Gamma_{\theta,\kappa}}e^{z(t-s)}z^{\alpha-1}A^{\sigma_{2}}(z^{\alpha}+A)^{-1}Q^{1/2}z^{1-2H}dz\right\|^{2}_{\mathcal{L}_{2}}ds\\
\leq& C\int_{0}^{t}(t-s)^{-2\alpha+1-\epsilon}\left (\int_{\Gamma_{\theta,\kappa}}|e^{z(t-s)}||z|^{\alpha-2H}\left\|A^{\sigma_{2}+\rho}(z^{\alpha}+A)^{-1}\right\||dz|\right )^{2}ds\\
\leq& C\int_{0}^{t}(t-s)^{-2\alpha+1-\epsilon}\left (\int_{\Gamma_{\theta,\kappa}}|e^{z(t-s)}||z|^{(\sigma_{2}+\rho)\alpha-2H}|dz|\right )^{2}ds\\
\leq& C\int_{0}^{t}(t-s)^{-2\alpha+1-\epsilon}\left ((t-s)^{-(\rho+\sigma_{2})\alpha+2H-1}\right )^{2}ds\\
\leq& Ct^{-2\alpha-\epsilon-2(\rho+\sigma_{2})\alpha+4H}.\\
\end{aligned}
\end{equation*}
To make $\uppercase\expandafter{\romannumeral2}$ bounded and the above estimates effective, $-2\alpha-\epsilon-2(\rho+\sigma_{2})\alpha+4H>0$ and $0\leq\sigma_{2}+\rho\leq 1$ are needed, i.e., $\sigma_{2}\in [-\rho,\min(1-\rho,\frac{2H}{\alpha}-\frac{\epsilon}{2\alpha}-1-\rho)]$. Combining the estimates of $\uppercase\expandafter{\romannumeral1}$ and $\uppercase\expandafter{\romannumeral2}$, the desired result is obtained.
\end{proof}

Then we provide the H\"{o}lder regularity of the mild solution $u$.
\begin{theorem}\label{thmholder}
	Let $u$ be the mild solution of Eq. $\eqref{equretosol}$ and $\|A^{-\rho}\|_{\mathcal{L}^{0}_{2}}<\infty$ with $\rho\in [0,\frac{H}{\alpha})\cap[0,1]$. Then we have
	\begin{equation*}
	\mathbb{E}\left \|\frac{u(t)-u(t-\tau)}{\tau^{\gamma}}\right \|^{2}_{\mathbb{H}}\leq C,
	\end{equation*}
	where $\gamma< H-\rho\alpha$.
\end{theorem}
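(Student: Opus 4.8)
The plan is to estimate $\mathbb{E}\|u(t)-u(t-\tau)\|_{\mathbb{H}}^2$ using the mild-solution representation \eqref{eqrepsol} and Lemma \ref{eqcorlem}, exactly as in the proof of Theorem \ref{thmsobo}, but now tracking the temporal increment. First I would write
$$u(t)-u(t-\tau)=\int_{t-\tau}^{t}E(t-s)\,dW^{H}_{Q}(s)+\int_{0}^{t-\tau}\bigl(E(t-s)-E(t-\tau-s)\bigr)\,dW^{H}_{Q}(s)=:\mathrm{I}+\mathrm{II},$$
so that $\mathbb{E}\|u(t)-u(t-\tau)\|_{\mathbb{H}}^2\leq 2\mathbb{E}\|\mathrm{I}\|_{\mathbb{H}}^2+2\mathbb{E}\|\mathrm{II}\|_{\mathbb{H}}^2$. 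For each term, Lemma \ref{eqcorlem} turns the second moment into a double integral against the kernel $|r-s|^{2H-2}$; then the same Cauchy--Schwarz splitting with a weight $s^{2H-1+\epsilon}$ (playing the role of the $s^{2\alpha-1+\epsilon}$ weight before) reduces the job to bounding $\mathcal{L}_2^0$-norms of $E$ and of increments of $E$, which are controlled via the resolvent estimate \eqref{equresolvent} together with the hypothesis $\|A^{-\rho}\|_{\mathcal{L}_2^0}<\infty$.

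For term $\mathrm{I}$, after inserting $A^{\rho}A^{-\rho}$ and using \eqref{equresolvent} with $\beta=\rho$, the contour bound gives $\|E(s)Q^{1/2}\|_{\mathcal{L}_2}\leq C s^{-\rho\alpha}$, and the double $|r-s|^{2H-2}$ integral over $[t-\tau,t]^2$ produces a factor $\tau^{2H-2\rho\alpha}$ (one needs $2H-2\rho\alpha>0$, i.e. $\rho<H/\alpha$, and $\rho\alpha<$ something for local integrability, which holds since $\rho\le 1$ and $\alpha<1$ ensures $\rho\alpha<1$); dividing by $\tau^{2\gamma}$ leaves a bounded quantity provided $\gamma<H-\rho\alpha$. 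For term $\mathrm{II}$, the key is the increment estimate for $E$: writing $E(t-s)-E(t-\tau-s)=\int_{t-\tau-s}^{t-s}E'(\xi)\,d\xi$ and using $E'(t)=\frac{1}{2\pi\mathbf{i}}\int_{\Gamma_{\theta,\kappa}}e^{zt}z^{\alpha}(z^{\alpha}+A)^{-1}dz$, one gets $\|(E(t+\tau)-E(t))A^{-\rho}\|_{\mathcal{L}_2^0}\leq C\tau^{\mu}\,t^{-\rho\alpha-\mu}$ for $\mu\in[0,1]$ by interpolating between the bound $\|E'(t)A^{-\rho}\|\leq Ct^{-\rho\alpha-1}$ and the direct bound $\|E(t)A^{-\rho}\|\leq Ct^{-\rho\alpha}$. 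Feeding this into the weighted Cauchy--Schwarz splitting and carrying out the two time-integrals (which converge once the exponents are arranged, again using $\rho<H/\alpha$) yields $\mathbb{E}\|\mathrm{II}\|_{\mathbb{H}}^2\leq C\tau^{2\gamma}$ for any $\gamma<H-\rho\alpha$, which is the claim.

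The main obstacle is the bookkeeping of exponents in the $\mathrm{II}$ term: one must choose the interpolation parameter $\mu$ and the split of the Hölder exponent $\gamma$ across the two regularity factors so that \emph{both} resulting time-integrals converge at $s\to 0$ and $s\to t-\tau$ simultaneously, while extracting exactly the power $\tau^{2\gamma}$ with $\gamma$ allowed to approach $H-\rho\alpha$. This is the same type of delicate balancing as in Theorem \ref{thmsobo} (the roles of $\sigma_1,\sigma_2$ there), and the constraint $\rho<H/\alpha$ together with $\rho\le 1$, $\alpha\in(0,1)$ is what makes all the competing conditions compatible; the $\epsilon$ in the weight is chosen small enough to absorb the endpoint. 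Once the increment bound for $E$ and the weighted splitting are in place, the remaining computations are routine.
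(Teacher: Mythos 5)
Your proposal follows essentially the same route as the paper's proof: the identical decomposition of $u(t)-u(t-\tau)$ into the increment integral over $[t-\tau,t]$ and the $E$-increment integral over $[0,t-\tau]$, Lemma \ref{eqcorlem} plus a weighted Cauchy--Schwarz splitting for each term, and an increment bound of the form $\tau^{\mu}t^{-\rho\alpha-\mu}$ for $E$ composed with $Q^{1/2}$ (the paper obtains this on the contour via $|e^{z\tau}-1|\leq C\tau^{\gamma}|z|^{\gamma}$ rather than by interpolating between $E'$ and $E$ in the time domain, but the two are equivalent), leading to the same exponent constraints $\gamma<H-\rho\alpha$, $\rho\alpha<1$, $\rho<H/\alpha$. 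Aside from a sign slip in the exponent of $A^{\pm\rho}$ in your increment estimates, the argument matches the paper's.
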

\begin{proof}
	Divide $\mathbb{E}\left\|\frac{u(t)-u(t-\tau)}{\tau^{\gamma}}\right\|_{\mathbb{H}}^{2}$ into the following two parts
	\begin{equation*}
	\begin{aligned}
	\mathbb{E}\left\|\frac{u(t)-u(t-\tau)}{\tau^{\gamma}}\right\|_{\mathbb{H}}^{2}\leq&\mathbb{E}\left\|\tau^{-\gamma}\int_{t-\tau}^{t}E(t-s)dW^{H}_{Q}\right\|_{\mathbb{H}}^{2}\\
	&+\mathbb{E}\left\|\frac{\int_{0}^{t-\tau}E(t-s)-E(t-\tau-s)dW^{H}_{Q}}{\tau^{\gamma}}\right\|_{\mathbb{H}}^{2}\\
	\leq&\uppercase\expandafter{\romannumeral1}+\uppercase\expandafter{\romannumeral2}.
	\end{aligned}
	\end{equation*}
	Using Lemma \ref{eqcorlem},  $\uppercase\expandafter{\romannumeral1}$ can be further separated into
	\begin{equation*}
	\begin{aligned}
	\uppercase\expandafter{\romannumeral1}\leq& \frac{C}{\tau^{2\gamma}}H(2H-1)\int^{t}_{t-\tau}\int^{t}_{t-\tau}\left \langle E(t-s)Q^{1/2},E(t-r)Q^{1/2}\right\rangle |r-s|^{2H-2}drds\\
	\leq &\frac{C}{\tau^{2\gamma}}H(2H-1)\left (\int_{t-\tau}^{t}(t-s)^{2\rho\alpha-1+\epsilon}\|E(t-s)Q^{1/2}\|^{2}_{\mathcal{L}_{2}}ds\right )^{1/2}\\
	&\cdot\left (\int_{t-\tau}^{t}(t-s)^{-2\rho\alpha+1-\epsilon}\left\|\int_{0}^{t-s}E(t-s-r)Q^{1/2}|r|^{2H-2}dr\right\|^{2}_{\mathcal{L}_{2}}ds\right )^{1/2}\\
	\leq &\frac{C}{\tau^{2\gamma}}\uppercase\expandafter{\romannumeral1}_{1}\cdot\uppercase\expandafter{\romannumeral1}_{2}.
	\end{aligned}
	\end{equation*}
	As for $\uppercase\expandafter{\romannumeral1}_{1}$, it follows from $\|A^{-\rho}Q^{1/2}\|_{\mathcal{L}_{2}}=\|A^{-\rho}\|_{\mathcal{L}^{0}_{2}}<\infty$, $\eqref{equresolvent}$, and the condition $\rho\in[0,1]$ that
	\begin{equation*}
	\begin{aligned}
	\uppercase\expandafter{\romannumeral1}_{1}^{2}\leq&C\int_{t-\tau}^{t}(t-s)^{2\rho\alpha-1+\epsilon}\left\|\int_{\Gamma_{\theta,\kappa}}e^{z(t-s)}z^{\alpha-1}(z^{\alpha}+A)^{-1}Q^{1/2}dz\right\|^{2}_{\mathcal{L}_{2}}ds\\
	\leq& C\int_{t-\tau}^{t}(t-s)^{2\rho\alpha-1+\epsilon}\left (\int_{\Gamma_{\theta,\kappa}}|e^{z(t-s)}||z|^{\alpha-1}\left\|A^{\rho}(z^{\alpha}+A)^{-1}\right\||dz|\right )^{2}ds\\
	\leq& C\int_{t-\tau}^{t}(t-s)^{2\rho\alpha-1+\epsilon}\left (\int_{\Gamma_{\theta,\kappa}}|e^{z(t-s)}||z|^{\rho\alpha-1}|dz|\right )^{2}ds\\
	\leq& C\int_{t-\tau}^{t}(t-s)^{2\rho\alpha-1+\epsilon}(t-s)^{-2\rho\alpha}ds\\
	\leq& C\tau^{\epsilon}.
	\end{aligned}
	\end{equation*}
Similarly, using convolution property of Laplace transform, $\uppercase\expandafter{\romannumeral1}_{2}$ satisfies
	\begin{equation*}
	\begin{aligned}
	\uppercase\expandafter{\romannumeral1}_{2}^{2}\leq&C\int_{t-\tau}^{t}(t-s)^{-2\rho\alpha+1-\epsilon}\left\|\int_{\Gamma_{\theta,\kappa}}e^{z(t-s)}z^{\alpha-1}(z^{\alpha}+A)^{-1}Q^{1/2}z^{1-2H}\right\|^{2}_{\mathcal{L}_{2}}ds\\
	\leq& C\int_{t-\tau}^{t}(t-s)^{-2\rho\alpha+1-\epsilon}\left (\int_{\Gamma_{\theta,\kappa}}|e^{z(t-s)}||z|^{\alpha-2H}\left\|A^{\rho}(z^{\alpha}+A)^{-1}\right\||dz|\right )^{2}ds\\
	\leq& C\int_{t-\tau}^{t}(t-s)^{-2\rho\alpha+1-\epsilon}\left (\int_{\Gamma_{\theta,\kappa}}|e^{z(t-s)}||z|^{\rho\alpha-2H}|dz|\right )^{2}ds\\
	\leq& C\int_{t-\tau}^{t}(t-s)^{-2\rho\alpha+1-\epsilon}\left ((t-s)^{-\rho\alpha+2H-1}\right )^{2}ds\\
	\leq& C\tau^{4H-4\rho\alpha-\epsilon}.
	\end{aligned}
	\end{equation*}
From the estimates of $\uppercase\expandafter{\romannumeral1}_{1}$ and $\uppercase\expandafter{\romannumeral1}_{2}$, it can be seen that $\gamma\leq H-\rho\alpha$ is needed to ensure the boundedness of $\uppercase\expandafter{\romannumeral1}$.
	
Following Lemma \ref{eqcorlem} and the Cauchy-Schwarz inequality, we divide $\uppercase\expandafter{\romannumeral2}$ into
\begin{equation*}
\begin{aligned}
\uppercase\expandafter{\romannumeral2}
\leq& C\tau^{-2\gamma}\int_{0}^{t-\tau}\int_{0}^{t-\tau}\left \langle ( E(t-s)-E(t-\tau-s))Q^{1/2},\right .\\
&\qquad\qquad\qquad\qquad\left .(E(t-r)-E(t-\tau-r))Q^{1/2}\right \rangle|r-s|^{2H-2}drds\\
\leq& C\tau^{-\gamma_{1}}\left( \int_{0}^{t-\tau}(t-\tau-s)^{\varrho}\| (E(t-s)-E(t-\tau-s))Q^{1/2}\|^{2}_{\mathcal{L}_{2}}ds\right)^{1/2}\\
&\cdot \tau^{-\gamma_{2}}\left (\int_{0}^{t-\tau}(t-\tau-s)^{-\varrho}\right .\\
&\qquad\qquad\qquad\left .\cdot\left\|\int_{s}^{t-\tau}(E(t-r)-E(t-\tau-r))Q^{1/2}|r-s|^{2H-2}dr\right\|_{\mathcal{L}_{2}}^{2}ds\right )^{1/2}\\
\leq &C\uppercase\expandafter{\romannumeral2}_{1}\cdot\uppercase\expandafter{\romannumeral2}_{2},
\end{aligned}
\end{equation*}
where $2\gamma=\gamma_{1}+\gamma_{2}$. With the condition $\rho\in[0,1]$ and the fact $\left |\frac{e^{z\tau}-1}{\tau^{\gamma}}\right |\leq C|z|^{\gamma}$ on $\Gamma_{\theta,\kappa}$ for $\gamma>0$ \cite{gunzburger_2018}, as for $\uppercase\expandafter{\romannumeral2}_{1}$,  by simple calculations one can get the estimate
\begin{equation*}
\begin{aligned}
&\left\|\int_{\Gamma_{\theta,\kappa}}\tau^{-\gamma_{1}}(e^{z(t-s)}-e^{z(t-\tau-s)})z^{\alpha-1}(z^{\alpha}+A)^{-1}Q^{1/2}dz\right\|^{2}_{\mathcal{L}_{2}}\\
\leq&C\left (\int_{\Gamma_{\theta,\kappa}}|e^{z(t-\tau-s)}||z|^{\gamma_{1}+\alpha-1}\left\|A^{\rho}(z^{\alpha}+A)^{-1}\right\|\left\|A^{-\rho}\right\|_{\mathcal{L}_{2}^{0}}|dz|\right )^{2}\\
\leq& C\left (\int_{\Gamma_{\theta,\kappa}}|e^{z(t-\tau-s)}||z|^{\gamma_{1}+\alpha-1}\left\|A^{\rho}(z^{\alpha}+A)^{-1}\right\||dz|\right )^{2}\\
\leq& C\left (\int_{\Gamma_{\theta,\kappa}}|e^{z(t-\tau-s)}||z|^{\alpha\rho+\gamma_{1}-1}|dz|\right )^{2},\\
\end{aligned}
\end{equation*}
which leads to
\begin{equation*}
\begin{aligned}
\uppercase\expandafter{\romannumeral2}_{1}^{2}\leq& C\int_{0}^{t-\tau}(t-\tau-s)^{\varrho-2(\alpha\rho+\gamma_{1})}ds\\
\leq& C(t-\tau)^{\varrho-2(\alpha\rho+\gamma_{1})+1},
\end{aligned}
\end{equation*}
in which $\gamma_{1}$ needs to satisfy $\varrho-2(\alpha\rho+\gamma_{1})>-1$ and $\gamma_{1}>0$, i.e., $0<\gamma_{1}<\frac{\varrho+1}{2}-\alpha\rho$, to ensure the boundedness of $\uppercase\expandafter{\romannumeral2}_{1}$.
Similarly, one has
		\begin{equation*}
	\begin{aligned}
	&\tau^{-2\gamma_{2}}\left\|\int_{0}^{t-s-\tau}(E(t-s-r)-E(t-s-\tau-r))Q^{1/2}|r|^{2H-2}dr\right\|_{\mathcal{L}_{2}}^{2}\\
	\leq& C\tau^{-2\gamma_{2}}\left \|\int_{0}^{t-s-\tau}\int_{\Gamma_{\theta,\kappa}}(e^{z(t-s-r)}-e^{z(t-s-\tau-r)})z^{\alpha-1}(z^{\alpha}+A)^{-1}Q^{1/2}dz|r|^{2H-2}dr\right\|_{\mathcal{L}_{2}}^{2}\\
	\leq&C\left (\int_{0}^{t-s-\tau}\int_{\Gamma_{\theta,\kappa}}|e^{z(t-s-\tau-r)}||z|^{\gamma_{2}+\alpha-1}\left\|A^{\rho}(z^{\alpha}+A)^{-1}\right\||dz||r|^{2H-2}dr\right)^{2}\\
	\leq&C\left (\int_{0}^{t-s-\tau}\int_{\Gamma_{\theta,\kappa}}|e^{z(t-s-\tau-r)}||z|^{\gamma_{2}+\rho\alpha-1}|dz||r|^{2H-2}dr\right)^{2}\\
	\leq& C\left (\int_{0}^{t-s-\tau}(t-s-\tau-r)^{-\rho\alpha-\gamma_{2}}|r|^{2H-2}dr\right)^{2},
	\end{aligned}
	\end{equation*}
	which implies
	\begin{equation*}
	\begin{aligned}
	\uppercase\expandafter{\romannumeral2}_{2}^{2}
	\leq& C\int_{0}^{t-\tau}(t-\tau-s)^{-\varrho}(t-\tau-s)^{4H-2-2\rho\alpha-2\gamma_{2}}ds.\\
	\end{aligned}
	\end{equation*}
To keep the boundedness of $\uppercase\expandafter{\romannumeral2}_{2}$, we need to require $\gamma_{2}>0$, $4H-2-2\rho\alpha-2\gamma_{2}-\varrho>-1$, and $-\rho\alpha-\gamma_{2}>-1$, i.e., $0<\gamma_{2}<\min(2H-\rho\alpha-\frac{\varrho+1}{2},1-\rho\alpha)$. Overall, we can take $\varrho+1=2(H-\epsilon)\geq2(\rho\alpha+\epsilon) $ and $H+\epsilon<1$ to get the desired  result.
\end{proof}

Combining Theorems \ref{thmsobo}, \ref{thmholder} and the regularity results for Eq. \eqref{equretosol1} in \cite{jin2016-1,sakamoto_2011}, we obtain
\begin{theorem}
	Let $G$ be the mild solution of Eq. \eqref{equretosol0} and $\|A^{-\rho}\|_{\mathcal{L}^{0}_{2}}<\infty$. We have
	\begin{enumerate}
		\item if $\rho<\frac{H}{\alpha}$ and $G_{0}\in \hat{H}^{q}(D)$ with $q\leq \sigma$, then
		\begin{equation*}
			(\mathbb{E}\|A^{\sigma}G\|_{\mathbb{H}}^{2})^{1/2}\leq C+Ct^{-(\sigma-q)\alpha}\|G_{0}\|_{\hat{H}^{q}(D)},
		\end{equation*}
		where $\sigma=\min(1,1-\rho,\frac{H}{\alpha}-\rho-\epsilon)$.
			\item if $\rho\in[0,\frac{H}{\alpha})\cap[0,1]$ and $G_{0}\in \mathbb{H}$, then
		\begin{equation*}
		\left (\mathbb{E}\left \|\frac{G(t)-G(t-\tau)}{\tau^{\gamma}}\right \|_{\mathbb{H}}^{2}\right )^{1/2}\leq C+Ct^{-\gamma}\|G_{0}\|_{\mathbb{H}},
		\end{equation*}
		where $\gamma<H-\rho\alpha$.
	\end{enumerate}
\end{theorem}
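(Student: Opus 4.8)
The plan is to use linearity. Equation~\eqref{equretosol0} splits as $G=v+u$, where $v$ solves the deterministic problem \eqref{equretosol1} (carrying the initial datum $G_{0}$) and $u$ solves the stochastic problem \eqref{equretosol} (with vanishing initial datum). Applying the triangle inequality $\|A^{\sigma}G\|_{\mathbb{H}}\le\|A^{\sigma}v\|_{\mathbb{H}}+\|A^{\sigma}u\|_{\mathbb{H}}$ pointwise in $\omega$, then the triangle inequality in $L^{2}(\Omega)$, and using that $v$ is deterministic, one is reduced to
\begin{equation*}
(\mathbb{E}\|A^{\sigma}G\|_{\mathbb{H}}^{2})^{1/2}\le\|A^{\sigma}v(t)\|_{\mathbb{H}}+(\mathbb{E}\|A^{\sigma}u\|_{\mathbb{H}}^{2})^{1/2},
\end{equation*}
and, for the time increments,
\begin{equation*}
\left(\mathbb{E}\left\|\frac{G(t)-G(t-\tau)}{\tau^{\gamma}}\right\|_{\mathbb{H}}^{2}\right)^{1/2}\le\left\|\frac{v(t)-v(t-\tau)}{\tau^{\gamma}}\right\|_{\mathbb{H}}+\left(\mathbb{E}\left\|\frac{u(t)-u(t-\tau)}{\tau^{\gamma}}\right\|_{\mathbb{H}}^{2}\right)^{1/2}.
\end{equation*}

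The stochastic halves are already settled. Since $\sigma=\min(1,1-\rho,\tfrac{H}{\alpha}-\rho-\epsilon)\le\min(1-\rho,\tfrac{H}{\alpha}-\rho-\epsilon)$, Theorem~\ref{thmsobo} applies and yields $\mathbb{E}\|A^{\sigma}u\|_{\mathbb{H}}^{2}\le C$; likewise, the hypothesis of part~(2), namely $\rho\in[0,\tfrac{H}{\alpha})\cap[0,1]$ and $\gamma<H-\rho\alpha$, is exactly that of Theorem~\ref{thmholder}, which gives $\mathbb{E}\|\tau^{-\gamma}(u(t)-u(t-\tau))\|_{\mathbb{H}}^{2}\le C$. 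These supply the additive constants $C$ in the two assertions.

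For the deterministic halves, taking the Laplace transform in \eqref{equretosol1} gives $\tilde v(z)=z^{\alpha-1}(z^{\alpha}+A)^{-1}G_{0}$, i.e., $v(t)=E(t)G_{0}$ with the same operator $E$ introduced above, so that all the machinery behind \eqref{equresolvent} is directly applicable. Combining \eqref{equresolvent} with the standard contour deformation (taking the radius $\kappa\sim t^{-1}$ on $\Gamma_{\theta,\kappa}$) produces the smoothing estimate $\|A^{\sigma}v(t)\|_{\mathbb{H}}\le Ct^{-(\sigma-q)\alpha}\|G_{0}\|_{\hat H^{q}(D)}$ for $q\le\sigma$; here the truncation $\sigma\le1$ built into the definition of $\sigma$ is exactly what keeps the exploited power of $A$ inside the admissible range $[0,1]$ of \eqref{equresolvent}. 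For the time increment I would write $v(t)-v(t-\tau)=(E(t)-E(t-\tau))G_{0}$ and interpolate the two elementary bounds $\|E(t)\|\le C$ and $\|\partial_{t}E(t)\|\le Ct^{-1}$ (the latter because $\partial_{t}E(t)=\frac{1}{2\pi\mathbf{i}}\int_{\Gamma_{\theta,\kappa}}e^{zt}z^{\alpha}(z^{\alpha}+A)^{-1}dz$ and $\|z^{\alpha}(z^{\alpha}+A)^{-1}\|\le C$): this gives $\|E(t)-E(t-\tau)\|\le\min(C,\,C\ln\tfrac{t}{t-\tau})\le C(\tau/t)^{\gamma}$ for every $\gamma\in(0,1)$, hence $\|\tau^{-\gamma}(v(t)-v(t-\tau))\|_{\mathbb{H}}\le Ct^{-\gamma}\|G_{0}\|_{\mathbb{H}}$. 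Both of these deterministic estimates are also available directly from \cite{jin2016-1,sakamoto_2011}.

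Adding the deterministic and stochastic contributions line by line gives the two claimed inequalities. I do not expect a genuine obstacle at this stage: the analytic substance is entirely contained in Theorems~\ref{thmsobo} and \ref{thmholder}, and the only points that need care are (i) checking that the ranges of $\sigma$ and $\gamma$ allowed in the statement sit inside those permitted by the two theorems --- which is why the definition of $\sigma$ carries the extra ``$\min$ with $1$'' and why the inequality $\gamma<H-\rho\alpha$ is strict --- and (ii) keeping the powers of $A$ consistent when passing between the left-hand norm $\|A^{\sigma}\cdot\|_{\mathbb{H}}$ and the scale $\hat H^{q}(D)$ used for the datum, which is what pins down the exponent $(\sigma-q)\alpha$ in the smoothing estimate for $v$.
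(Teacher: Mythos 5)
Your proposal is correct and follows exactly the route the paper takes: the paper gives no written proof, simply stating that the theorem follows by combining Theorems \ref{thmsobo} and \ref{thmholder} for the stochastic component $u$ with the known deterministic regularity results for \eqref{equretosol1} from \cite{jin2016-1,sakamoto_2011}. Your additional sketch of how the deterministic smoothing and H\"older bounds for $v=E(t)G_0$ follow from \eqref{equresolvent} and contour deformation is a sound filling-in of the part the paper delegates to the references.
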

\section{Time discretization and error analysis}
In this section, we use backward Euler convolution quadrature \cite{lubich_1988-1,lubich_1988} to discretize the temporal operator and the corresponding error estimates are provided.

 Let the time step size $\tau=T/N$ with $N\in\mathbb{N}$, $t_i=i\tau$, $i=0,1,\ldots,N$, and $0=t_0<t_1<\cdots<t_N=T$. Denote
\begin{equation*}
	\delta_{\tau}(\zeta)=\frac{1-\zeta}{\tau}
\end{equation*}
and
\begin{equation*}
	(\delta_{\tau}(\zeta))^{\alpha}=\sum_{i=1}^{n}d^{(\alpha)}_{i}\zeta^{i}.
\end{equation*}
Introducing $u^{n}$ as the numerical solution at time $t_{n}$ and using backward Euler method to discretize the corresponding temporal operator, the semi-discrete scheme of \eqref{equretosol} can be written as
\begin{equation}\label{eqfullscheme}
	\frac{u^{n}-u^{n-1}}{\tau}+\sum_{i=0}^{n-1}d^{(1-\alpha)}_{i}Au^{n-i}=\bar{\partial}_{\tau} W^{H}_{Q}(t_{n}),
\end{equation}
where
\begin{equation}
	\bar{\partial}_{\tau} W^{H}_{Q}(t)=\left\{
	\begin{aligned}
		&0\qquad t=t_{0},\\
		&\frac{W^{H}_{Q}(t_{j})-W^{H}_{Q}(t_{j-1})}{\tau}\qquad t\in(t_{j-1},t_{j}],\\
		&0\qquad t>t_{N}.
	\end{aligned} \right.
\end{equation}

Multiplying $\zeta^{n}$ on both sides of \eqref{eqfullscheme} and summing $n$ from $1$ to $\infty$ lead to
\begin{equation*}
	\sum_{n=1}^{\infty}\frac{u^{n}-u^{n-1}}{\tau}\zeta^{n}+\sum_{n=1}^{\infty}\sum_{i=0}^{n-1}d^{(1-\alpha)}_{i}Au^{n-i}\zeta^{n}=\sum_{n=1}^{\infty}\bar{\partial}_{\tau} W^{H}_{Q}(t_{n})\zeta^{n}.
\end{equation*}
By the definition of $d^{(1-\alpha)}_{i}$, we have
\begin{equation*}
	(\delta_{\tau}(\zeta)+(\delta_{\tau}(\zeta))^{1-\alpha}A)\sum_{n=1}^{\infty}u^{n}\zeta^{n}=\sum_{n=1}^{\infty}\bar{\partial}_{\tau} W^{H}_{Q}(t_{n})\zeta^{n},
\end{equation*}
which leads to
\begin{equation*}
	\sum_{n=1}^{\infty}u^{n}\zeta^{n}=(\delta_{\tau}(\zeta))^{\alpha-1}((\delta_{\tau}(\zeta))^{\alpha}+A)^{-1}\sum_{n=1}^{\infty}\bar{\partial}_{\tau} W^{H}_{Q}(t_{n})\zeta^{n}.
\end{equation*}
Using Cauchy's integral formula and doing simple calculations lead to
\begin{equation*}
	u^{n}=\frac{\tau}{2\pi\mathbf{i}}\int_{\Gamma^{\tau}_{\theta,\kappa}}e^{zt_{n}}(\delta_{\tau}(e^{-z\tau}))^{\alpha-1}((\delta_{\tau}(e^{-z\tau}))^{\alpha}+A)^{-1}\sum_{n=1}^{\infty}\bar{\partial}_{\tau} W^{H}_{Q}(t_{n})e^{-zt_{n}}dz,
\end{equation*}
where $\Gamma^\tau_{\theta,\kappa}=\{z\in \mathbb{C}:\kappa\leq |z|\leq\frac{\pi}{\tau\sin(\theta)},|\arg z|=\theta\}\cup\{z\in \mathbb{C}:|z|=\kappa,|\arg z|\leq\theta\}$. With help of the fact \cite{gunzburger_2018}
\begin{equation*}
	\sum_{n=1}^{\infty}\bar{\partial}_{\tau} W^{H}_{Q}(t_{n})e^{-zt_{n}}=\frac{z}{e^{z\tau}-1} \widetilde{\bar{\partial}_{\tau}W^{H}_{Q}},
\end{equation*}
the solution of Eq. \eqref{eqfullscheme} has the form
\begin{equation*}
	u^{n}=\frac{1}{2\pi\mathbf{i}}\int_{\Gamma^{\tau}_{\theta,\kappa}}e^{zt_{n}}(\delta_{\tau}(e^{-z\tau}))^{\alpha-1}((\delta_{\tau}(e^{-z\tau}))^{\alpha}+A)^{-1}\frac{z\tau}{e^{z\tau}-1} \widetilde{\bar{\partial}_{\tau}W^{H}_{Q}}dz.
\end{equation*}
Introduce
\begin{equation*}
	\bar{E}(t)=\frac{1}{2\pi\mathbf{i}}\int_{\Gamma^{\tau}_{\theta,\kappa}}e^{zt}(\delta_{\tau}(e^{-z\tau}))^{\alpha-1}((\delta_{\tau}(e^{-z\tau}))^{\alpha}+A)^{-1}\frac{z\tau}{e^{z\tau}-1} dz;
\end{equation*}
the convolution property of Laplace transform gives
\begin{equation}\label{eqrepsolfull}
	u^{n}=\int_{0}^{t_{n}}\bar{E}(t_{n}-s)\bar{\partial}_{\tau}W^{H}_{Q}(s)ds.
\end{equation}
Using \eqref{eqrepsolfull} and \eqref{eqrepsol} and taking the expectation of $\|u(t_{n})-u^{n}\|^{2}_{\mathbb{H}}$ yield
\begin{equation}\label{equtimeerrorrep}
	\begin{aligned}
		&\mathbb{E}\|u(t_{n})-u^{n}\|_{\mathbb{H}}^{2}\\
		=&\mathbb{E}\left \|\int_{0}^{t_{n}}E(t_{n}-s)dW^{H}_{Q}(s)-\int_{0}^{t_{n}}\bar{E}(t_{n}-s)(\bar{\partial}_{\tau}W^{H}_{Q}(s))ds\right \|_{\mathbb{H}}^{2}\\
		\leq&\mathbb{E}\left \|\int_{0}^{t_{n}}(E(t_{n}-s)-\bar{E}(t_{n}-s))dW^{H}_{Q}(s)\right \|_{\mathbb{H}}^{2}\\
		&+\mathbb{E}\left \|\int_{0}^{t_{n}}\bar{E}(t_{n}-s)\left(dW^{H}_{Q}(s)-\bar{\partial}_{\tau}W^{H}_{Q}(s)ds \right)\right \|_{\mathbb{H}}^{2}\\
		\leq&\vartheta_{1}+\vartheta_{2}.
	\end{aligned}
\end{equation}

To estimate $\mathbb{E}\|u(t_{n})-u^{n}\|_{\mathbb{H}}^{2}$, the following lemma is needed.
\begin{lemma}[\cite{gunzburger_2018}]\label{Lemseriesest}
	Let the given $\alpha\in(0,1)$ and $\theta \in\left(\frac{\pi}{2}, \operatorname{arccot}\left(-\frac{2}{\pi}\right)\right)$,  where $arccot$ means the inverse function of $\cot$, and a fixed $\xi \in (0,1)$. Then, when $z$ lies in the region enclosed by $\Gamma^\tau_\xi=\{z=-\ln{(\xi)}/\tau+\mathbf{i}y:y\in\mathbb{R}~and~|y|\leq \pi/\tau\}$, $\Gamma^\tau_{\theta,\kappa}$, and the two lines $\mathbb{R}\pm \mathbf{i}\pi/\tau$, whenever $0<\kappa \leq \min (1 / T,-\ln (\xi) / \tau)$, $\delta_\tau(e^{-z\tau})$ and $(\delta_\tau(e^{-z\tau})+A)^{-1}$ are both analytic. Furthermore, there are the estimates
	\begin{equation*}
	\begin{aligned}
	&\delta_{\tau}\left(e^{-z \tau}\right) \in \Sigma_{\theta}&\forall z \in \Gamma_{\theta, \kappa}^{\tau},\\
	&C_{0}|z| \leq\left|\delta_{\tau}\left(e^{-z\tau }\right)\right| \leq C_{1}|z|&\forall z \in \Gamma_{\theta, \kappa}^{\tau},\\
	&\left|\delta_{\tau}\left(e^{-z\tau }\right)-z\right| \leq C \tau|z|^{2}&\forall z \in \Gamma_{\theta, \kappa}^{\tau},\\
	&\left|\delta_{\tau}\left(e^{-z\tau }\right)^{\alpha}-z^{\alpha}\right| \leq C \tau|z|^{\alpha+1}&\forall z \in \Gamma_{\theta, \kappa}^{\tau},
	\end{aligned}
	\end{equation*}
	where the constants $C_0$, $C_1$, and $C$ are independent of $\tau$ and $\kappa\in (0,\min (1 / T,-\ln (\xi) / \tau))$.
	
\end{lemma}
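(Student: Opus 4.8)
The plan is to reduce every claim in the lemma to elementary properties of the single entire function $\phi(w):=1-e^{-w}$, via the identity $\delta_\tau(e^{-z\tau})=\tau^{-1}\phi(z\tau)$ and the substitution $w=z\tau$. Under this substitution $\Gamma^\tau_{\theta,\kappa}$ is carried onto the $\tau$-independent set $\{|\arg w|=\theta,\ \kappa\tau\le|w|\le\pi/\sin\theta\}\cup\{|w|=\kappa\tau,\ |\arg w|\le\theta\}$, which sits inside the fixed disc $\{|w|\le\pi/\sin\theta\}$, and the region enclosed by $\Gamma^\tau_\xi$, $\Gamma^\tau_{\theta,\kappa}$ and $\mathbb{R}\pm\mathbf{i}\pi/\tau$ is carried onto a region $\Omega\subset\{|\Im w|\le\pi\}$ lying outside the disc $|w|\le\kappa\tau$ and outside the sector $|\arg w|>\theta$; in particular $\Omega\cap(-\infty,0]=\emptyset$. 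I would record at once that $\kappa\tau\le\kappa T\le1$, and that the hypothesis $\theta<\operatorname{arccot}(-2/\pi)$ forces $\sin\theta>1/2$, hence $|w|\le\pi/\sin\theta<2\pi$ and $|\Re w|\le\pi|\cot\theta|<2$ on the two rays; thus $w$ stays uniformly away from the nonzero zeros $2\pi\mathbf{i}\,\mathbb{Z}\setminus\{0\}$ of $\phi$, which is what makes all the constants $\tau$-independent.

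The analyticity and the simpler estimates then fall out directly. Since $\phi$ is entire, $z\mapsto\delta_\tau(e^{-z\tau})$ is entire; and $(\delta_\tau(e^{-z\tau})+A)^{-1}$ is analytic wherever $\delta_\tau(e^{-z\tau})\notin\sigma(-A)\subset(-\infty,0)$. For $w$ with $|\Im w|\le\pi$, $\phi(w)\in(-\infty,0]$ forces $w\in(-\infty,0]$ (on $|\Im w|=\pi$ one has $\phi(w)=1+e^{-\Re w}>0$, and inside the open strip $\phi(w)$ real $\le0$ forces $e^{-w}\ge1$, i.e.\ $w$ real $\le0$); since $\Omega$ avoids $(-\infty,0]$, analyticity on $\Omega$ follows. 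For the size bound, write $\delta_\tau(e^{-z\tau})=z\cdot\phi(w)/w$; the map $w\mapsto\phi(w)/w$ is entire, equals $1$ at $w=0$, and is nonvanishing off $2\pi\mathbf{i}\,\mathbb{Z}$, hence is bounded above and below by positive constants on the compact scaled contour, giving $C_0|z|\le|\delta_\tau(e^{-z\tau})|\le C_1|z|$. For consistency, $\delta_\tau(e^{-z\tau})-z=\tau^{-1}(\phi(w)-w)$ and $|\phi(w)-w|\le\sum_{k\ge2}|w|^k/k!\le C|w|^2=C\tau^2|z|^2$, so $|\delta_\tau(e^{-z\tau})-z|\le C\tau|z|^2$. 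Finally, for $\delta_\tau(e^{-z\tau})^\alpha-z^\alpha$ I would split on the size of $|z\tau|$: when $|z\tau|$ is bounded below, $|\delta_\tau(e^{-z\tau})^\alpha-z^\alpha|\le|\delta_\tau(e^{-z\tau})|^\alpha+|z|^\alpha\le C|z|^\alpha\le C\tau|z|^{\alpha+1}$; when $|z\tau|$ is small, $\delta_\tau(e^{-z\tau})$ and $z$ are within $C\tau|z|^2\ll|z|$ of one another with moduli $\asymp|z|$, so the segment joining them stays in a fixed subsector of $\mathbb{C}\setminus(-\infty,0]$ at distance $\asymp|z|$ from $0$, and the mean value theorem for $w\mapsto w^\alpha$ gives $|\delta_\tau(e^{-z\tau})^\alpha-z^\alpha|\le C|z|^{\alpha-1}|\delta_\tau(e^{-z\tau})-z|\le C\tau|z|^{\alpha+1}$.

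The one genuinely delicate point is $\delta_\tau(e^{-z\tau})\in\Sigma_\theta$ on $\Gamma^\tau_{\theta,\kappa}$, i.e.\ $|\arg\phi(w)|\le\theta$ on the scaled contour; bounding $\arg(\phi(w)/w)$ crudely loses too much near $|\arg w|=\pi/2$, so I would argue directly and exploit conjugation symmetry to treat only $\Im w\ge0$. On the part with $\Re w\ge0$ (the right half of the arc), $|e^{-w}|\le1$, so $\phi(w)=1-e^{-w}$ lies in the closed disc of radius $1$ about $1$ and $|\arg\phi(w)|\le\pi/2<\theta$. On the rest (the two rays and the left half of the arc) write $w=-a+\mathbf{i}b$ with $a>0$, $b\in(0,\pi]$ and $a/b=|\cot(\arg w)|=:c$; then $\phi(w)=(1-e^a\cos b)+\mathbf{i}\,e^a\sin b$ has strictly positive imaginary part, so $\arg\phi(w)\in[0,\pi]$. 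If $1-e^a\cos b\ge0$ then $\arg\phi(w)\le\pi/2<\theta$; otherwise the desired $\arg\phi(w)\le|\arg w|\le\theta$ is, after clearing the positive denominator, equivalent to $e^{cb}(\cos b-c\sin b)\le1$, which holds because the left-hand side equals $1$ at $b=0$ and has derivative $-(1+c^2)e^{cb}\sin b<0$ on $(0,\pi)$. Here the exact truncation of $\Gamma^\tau_{\theta,\kappa}$ at $\Im z=\pm\pi/\tau$ is what guarantees $b\le\pi$, and the angle restriction is what keeps $a<2$ and the scaled contour away from $2\pi\mathbf{i}\,\mathbb{Z}\setminus\{0\}$.

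I expect this sector-containment step to be the main obstacle: it is the only place where the precise geometry of the contour and the angle hypothesis are truly used, and making it rigorous reduces to the one-variable monotonicity of $b\mapsto e^{cb}(\cos b-c\sin b)$ together with a careful case split on the sign of $\Re\phi(w)$. Everything else is a single Taylor expansion of $\phi$ and compactness of the scaled contour.
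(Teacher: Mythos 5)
The paper does not prove this lemma at all: it is imported verbatim, with a citation, from \cite{gunzburger_2018}, so there is no in-paper argument to compare yours against. Your blind proof is, as far as I can check, correct and self-contained. The reduction to $\phi(w)=1-e^{-w}$ with $w=z\tau$ is sound; the analyticity claim correctly uses that $\phi(w)\in(-\infty,0]$ with $|\Im w|\le\pi$ forces $w\in(-\infty,0]$, a set the enclosed region avoids; the Taylor bound gives $|\delta_\tau(e^{-z\tau})-z|\le C\tau|z|^2$; and the sector containment is handled correctly — the case split on the sign of $\Re\phi(w)$, the reduction to $e^{cb}(\cos b-c\sin b)\le1$ with derivative $-(1+c^2)e^{cb}\sin b<0$ on $(0,\pi)$, and the observation that the truncation $|\Im z|\le\pi/\tau$ is exactly what keeps $b\le\pi$ and the hypothesis $\theta<\operatorname{arccot}(-2/\pi)$ is what keeps the scaled contour away from $\pm 2\pi\mathbf{i}$ (since $\sin\theta>1/2$) all check out. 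Two spots deserve slightly more care in a final write-up, though neither is a gap: (i) to get constants independent of $\tau$ and $\kappa$, the upper and lower bounds on $|\phi(w)/w|$ should be taken over the fixed compact hull $\{|\arg w|\le\theta,\ |w|\le\pi/\sin\theta\}$, which contains every scaled contour, rather than over the $\tau$-dependent contour itself; (ii) in the mean-value step for $w\mapsto w^\alpha$ you should state explicitly that, for $|z\tau|$ below a fixed threshold, the segment from $z$ to $\delta_\tau(e^{-z\tau})$ has length $\le C|z\tau|\,|z|\ll|z|$ and hence stays in a fixed subsector of $\mathbb{C}\setminus(-\infty,0]$ where the principal power is analytic and $|u|^{\alpha-1}\le C|z|^{\alpha-1}$. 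With those points made explicit, your argument is a complete proof of the cited lemma.
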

 In the rest of paper, we take $\kappa\leq\frac{\pi}{t_{n}|\sin(\theta)|}$.
 Then we provide the estimates of $\vartheta_{1}$ and $\vartheta_{2}$ defined in \eqref{equtimeerrorrep}.

\begin{lemma}\label{lemtime1}
	Let $\|A^{-\rho}\|_{\mathcal{L}^{0}_{2}}<\infty$ with $\rho\in [0,\frac{H}{\alpha})\cap[0,1]$. Then $\vartheta_{1}$ defined in \eqref{equtimeerrorrep} satisfies the estimate
	\begin{equation*}
		\vartheta_{1}\leq C\tau^{2H-2\rho\alpha}.
	\end{equation*}
\end{lemma}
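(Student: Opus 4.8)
The plan is to imitate the treatment of the first term ($\uppercase\expandafter{\romannumeral1}$) in the proof of Theorem~\ref{thmholder}, with the convolution-quadrature error $E-\bar{E}$ in place of $E$ and the integrals running over all of $[0,t_{n}]$. Starting from the representation of $\vartheta_{1}$ in \eqref{equtimeerrorrep} and applying Lemma~\ref{eqcorlem} with $f=g=E-\bar{E}$, I would write
\[
\vartheta_{1}=H(2H-1)\int_{0}^{t_{n}}\!\!\int_{0}^{t_{n}}\big\langle(E-\bar{E})(t_{n}-s)Q^{1/2},(E-\bar{E})(t_{n}-r)Q^{1/2}\big\rangle|r-s|^{2H-2}\,dr\,ds .
\]
Symmetrising in $r$ and $s$, bounding the inner product by the product of $\mathcal{L}_{2}$-norms, applying the Cauchy--Schwarz inequality in $s$ against a weight $(t_{n}-s)^{a}$, and changing variables in the inner $r$-integral exactly as in Theorem~\ref{thmholder}, I would reduce to $\vartheta_{1}\leq C\,\vartheta_{1,1}\vartheta_{1,2}$ with
\[
\vartheta_{1,1}^{2}=\int_{0}^{t_{n}}(t_{n}-s)^{a}\|(E-\bar{E})(t_{n}-s)Q^{1/2}\|_{\mathcal{L}_{2}}^{2}\,ds ,
\]
\[
\vartheta_{1,2}^{2}=\int_{0}^{t_{n}}(t_{n}-s)^{-a}\Big\|\int_{0}^{t_{n}-s}(E-\bar{E})(t_{n}-s-r)Q^{1/2}\,r^{2H-2}\,dr\Big\|_{\mathcal{L}_{2}}^{2}\,ds ,
\]
where $a$ is a free parameter to be fixed at the end.

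The core of the argument is the kernel estimate for the convolution-quadrature error,
\[
\|A^{\beta}(E(t)-\bar{E}(t))\|\leq C\min\{1,\tau/t\}\,t^{-\beta\alpha}\leq C\tau^{\nu}t^{-\beta\alpha-\nu},\qquad \beta\in[0,1],\ \nu\in[0,1],\ 0<t\leq T.
\]
The bound without the factor $\min\{1,\tau/t\}$ follows from \eqref{equresolvent} for $E$ and from \eqref{equresolvent} together with Lemma~\ref{Lemseriesest} for $\bar{E}$. For the extra factor I would split $\tilde{E}(z)$ minus the symbol of $\bar{E}$ into the three pieces $\big(z^{\alpha-1}-\delta_{\tau}(e^{-z\tau})^{\alpha-1}\big)(z^{\alpha}+A)^{-1}$, $\ \delta_{\tau}(e^{-z\tau})^{\alpha-1}\big[(z^{\alpha}+A)^{-1}-(\delta_{\tau}(e^{-z\tau})^{\alpha}+A)^{-1}\big]$, and $\ \delta_{\tau}(e^{-z\tau})^{\alpha-1}(\delta_{\tau}(e^{-z\tau})^{\alpha}+A)^{-1}\big(1-\tfrac{z\tau}{e^{z\tau}-1}\big)$, and show each is $O(\tau|z|^{\beta\alpha})$ on $\Gamma^{\tau}_{\theta,\kappa}$ using the resolvent identity, \eqref{equresolvent}, the comparison inequalities of Lemma~\ref{Lemseriesest}, and the elementary bound $\big|1-\tfrac{z\tau}{e^{z\tau}-1}\big|\leq C\tau|z|$. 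Inserting $e^{zt}$ and integrating over $\Gamma^{\tau}_{\theta,\kappa}$ with $\kappa\leq\pi/(t|\sin\theta|)$ produces the $C\tau t^{-\beta\alpha-1}$ part, while the part of $\Gamma_{\theta,\kappa}$ with $|z|>\pi/(\tau\sin\theta)$ contributes an exponentially small remainder; this is the same mechanism as in \cite{gunzburger_2018,lubich_1988}.

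With $\|A^{-\rho}Q^{1/2}\|_{\mathcal{L}_{2}}=\|A^{-\rho}\|_{\mathcal{L}_{2}^{0}}<\infty$ and $\rho\in[0,1]$, the kernel estimate with $\beta=\rho$ gives $\|(E-\bar{E})(t)Q^{1/2}\|_{\mathcal{L}_{2}}\leq C\tau^{\nu_{1}}t^{-\rho\alpha-\nu_{1}}$, hence $\vartheta_{1,1}^{2}\leq C\tau^{2\nu_{1}}\int_{0}^{t_{n}}(t_{n}-s)^{a-2\rho\alpha-2\nu_{1}}\,ds$. For $\vartheta_{1,2}$ I would regard the inner integral as a convolution with Laplace symbol $\Gamma(2H-1)z^{1-2H}(\tilde{E}(z)-\tilde{\bar{E}}(z))Q^{1/2}$ and repeat the contour computation to obtain $\big\|\int_{0}^{t}(E-\bar{E})(t-r)Q^{1/2}r^{2H-2}\,dr\big\|_{\mathcal{L}_{2}}\leq C\tau^{\nu_{2}}t^{2H-\rho\alpha-1-\nu_{2}}$, so that $\vartheta_{1,2}^{2}\leq C\tau^{2\nu_{2}}\int_{0}^{t_{n}}(t_{n}-s)^{-a+4H-2\rho\alpha-2-2\nu_{2}}\,ds$. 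Both integrals converge whenever $a-2\rho\alpha-2\nu_{1}>-1$ and $a<4H-2\rho\alpha-1-2\nu_{2}$, which is compatible exactly when $\nu_{1}+\nu_{2}<2H-2\rho\alpha$; since $\rho<H/\alpha$ ensures $2H-2\rho\alpha>0$, letting $\nu_{1}+\nu_{2}\uparrow 2H-2\rho\alpha$ yields $\vartheta_{1}\leq C\vartheta_{1,1}\vartheta_{1,2}\leq C\tau^{2H-2\rho\alpha}$.

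I expect the main obstacle to be the kernel estimate $\|A^{\beta}(E(t)-\bar{E}(t))\|\leq C\tau t^{-\beta\alpha-1}$: one must extract the additional power of $|z|$ uniformly on the truncated contour $\Gamma^{\tau}_{\theta,\kappa}$, dispose of the tail of $\Gamma_{\theta,\kappa}$ beyond $|z|=\pi/(\tau\sin\theta)$, and keep all constants independent of $\tau$ and of $t$ down to $t\sim\tau$. Once this is in hand, the remainder is the routine exponent bookkeeping indicated above.
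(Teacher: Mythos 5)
Your overall strategy is the paper's: split the error into the contour--tail contribution ($\Gamma_{\theta,\kappa}\setminus\Gamma^{\tau}_{\theta,\kappa}$) plus the symbol--difference contribution on $\Gamma^{\tau}_{\theta,\kappa}$, apply Lemma \ref{eqcorlem} and a weighted Cauchy--Schwarz inequality, and drive everything with the estimate $\|\mathcal{E}_{\tau}(z)A^{\beta}\|\leq C\tau|z|^{\beta\alpha}$ obtained by interpolating between the $\beta=0$ and $\beta=1$ cases (your three-piece splitting of the symbol is a regrouping of the paper's two-piece splitting in \eqref{eqerrorH2l2}, and is fine). The gap is in the final bookkeeping. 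You route the argument through the pointwise-in-time kernel bound $\|(E-\bar{E})(t)Q^{1/2}\|_{\mathcal{L}_{2}}\leq C\tau^{\nu_{1}}t^{-\rho\alpha-\nu_{1}}$ and then integrate in time, which forces the two constraints $a-2\rho\alpha-2\nu_{1}>-1$ and $a<4H-2\rho\alpha-1-2\nu_{2}$; as you note these are compatible only for $\nu_{1}+\nu_{2}<2H-2\rho\alpha$ \emph{strictly}, and the time integrals (hence the constants) blow up as the endpoint is approached. So "letting $\nu_{1}+\nu_{2}\uparrow 2H-2\rho\alpha$" does not yield $C\tau^{2H-2\rho\alpha}$; it yields only $C_{\epsilon}\tau^{2H-2\rho\alpha-\epsilon}$ for every $\epsilon>0$ (at best a logarithmic correction if you optimize $\epsilon$ against $\tau$), which is weaker than the stated lemma.

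The paper avoids this by never converting the weight exponent into a power of $t_{n}$. It keeps the contour representation inside the time integral and applies Cauchy--Schwarz \emph{on the contour}, splitting $\bigl(\int_{\Gamma^{\tau}}|e^{z(t_{n}-s)}||z|^{\rho\alpha}|dz|\bigr)^{2}\leq\int_{\Gamma^{\tau}}|e^{2z(t_{n}-s)}||z|^{\varrho_{2}+\epsilon}|dz|\cdot\int_{\Gamma^{\tau}}|z|^{2\rho\alpha-\varrho_{2}-\epsilon}|dz|$. The first factor, after the $s$-integration, is $O(\tau^{-\epsilon})$; the second is a contour integral \emph{without} the exponential, whose truncation at $|z|\sim\pi/(\tau\sin\theta)$ converts the weight exponent directly into $\tau^{\varrho_{2}+\epsilon-2\rho\alpha-1}$. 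Consequently the free parameter $\varrho_{2}$ (and the auxiliary $\epsilon$) appears in the two $\tau$-exponents with opposite signs, $\vartheta_{1,2,1}^{2}\leq C\tau^{1+\varrho_{2}-2\rho\alpha}$ and $\vartheta_{1,2,2}^{2}\leq C\tau^{4H-2\rho\alpha-1-\varrho_{2}}$, and cancels in the product to give exactly $\tau^{2H-2\rho\alpha}$ for \emph{any} admissible $\varrho_{2}$ (the same cancellation handles $\vartheta_{1,1}$). If you replace your "pointwise kernel bound then integrate" step by this contour-level Cauchy--Schwarz, the rest of your argument goes through and recovers the clean rate.
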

\begin{proof}
	We split $\vartheta_{1}$ into two parts first
	\begin{equation*}
		\begin{aligned}
			\vartheta_{1}\leq& C\mathbb{E}\left \|\int_{0}^{t_{n}}\left(\int_{\Gamma_{\theta,\kappa}}e^{z(t_{n}-s)}\tilde{E}(z)dz\right.\right.\left.\left.-\int_{\Gamma^{\tau}_{\theta,\kappa}}e^{z(t_{n}-s)}\tilde{\bar{E}}(z)dz\right )dW^{H}_{Q}(s)\right \|_{\mathbb{H}}^{2}\\
			\leq& C\mathbb{E}\left \|\int_{0}^{t_{n}}\int_{\Gamma_{\theta,\kappa}\backslash\Gamma^{\tau}_{\theta,\kappa}}e^{z(t_{n}-s)}\tilde{E}(z)dzdW^{H}_{Q}(s)\right\|_{\mathbb{H}}^{2}\\
			&+C\mathbb{E}\left\|\int_{0}^{t_{n}}\int_{\Gamma^{\tau}_{\theta,\kappa}}e^{z(t_{n}-s)}\left(\tilde{E}(z)-\tilde{\bar{E}}(z)\right )dzdW^{H}_{Q}(s)\right \|_{\mathbb{H}}^{2}\\
			\leq& \vartheta_{1,1}+\vartheta_{1,2}.
		\end{aligned}	
	\end{equation*}
	According to Lemma \ref{eqcorlem}, there holds
	\begin{equation*}
		\begin{aligned}
			\vartheta_{1,1}
			\leq& CH(2H-1)\int_{0}^{t_{n}}\int_{0}^{t_{n}}\left \langle\int_{\Gamma_{\theta,\kappa}\backslash\Gamma^{\tau}_{\theta,\kappa}}e^{z(t_{n}-s)}\tilde{E}(z)dzQ^{1/2},\right .\\
			&\qquad\qquad\qquad\qquad\qquad\left. \int_{\Gamma_{\theta,\kappa}\backslash\Gamma^{\tau}_{\theta,\kappa}}e^{z(t_{n}-r)}\tilde{E}(z)dzQ^{1/2}\right \rangle |r-s|^{2H-2}dsdr\\
			\leq& CH(2H-1)\int_{0}^{t_{n}}\left \langle\int_{\Gamma_{\theta,\kappa}\backslash\Gamma^{\tau}_{\theta,\kappa}}e^{z(t_{n}-s)}\tilde{E}(z)dzQ^{1/2},\right .\\
			&\qquad\qquad\qquad\left. \int_{s}^{t_{n}}\int_{\Gamma_{\theta,\kappa}\backslash\Gamma^{\tau}_{\theta,\kappa}}e^{z(t_{n}-r)}\tilde{E}(z)dzQ^{1/2}|r-s|^{2H-2}dr\right \rangle ds\\
			\leq& C\left (\int_{0}^{t_{n}}(t_{n}-s)^{\varrho_{1}}\left \|\int_{\Gamma_{\theta,\kappa}\backslash\Gamma^{\tau}_{\theta,\kappa}}e^{z(t_{n}-s)}\tilde{E}(z)dzQ^{1/2}\right \|_{\mathcal{L}_{2}}^{2}ds\right )^{1/2}\\
			\cdot&\left (\int_{0}^{t_{n}}(t_{n}-s)^{-\varrho_{1}}\left\| \int_{s}^{t_{n}}\int_{\Gamma_{\theta,\kappa}\backslash\Gamma^{\tau}_{\theta,\kappa}}e^{z(t_{n}-r)}\tilde{E}(z)dzQ^{1/2}|r-s|^{2H-2}dr\right \|_{\mathcal{L}_{2}}^{2} ds\right )^{1/2}\\
			\leq& C\vartheta_{1,1,1}\cdot\vartheta_{1,1,2}.
		\end{aligned}
	\end{equation*}
	Consider $\vartheta_{1,1,1}$ first. Combining the boundedness of $\|A^{\rho}\|_{\mathcal{L}^{0}_{2}}$, the condition $\rho\in[0,1]$, the resolvent estimate \eqref{equresolvent}, and the Cauchy-Schwartz inequality gives
	\begin{equation*}
		\begin{aligned}
			\vartheta_{1,1,1}^{2}\leq& C\int_{0}^{t_{n}}(t_{n}-s)^{\varrho_{1}}\left (\int_{\Gamma_{\theta,\kappa}\backslash\Gamma^{\tau}_{\theta,\kappa}}|e^{z(t_{n}-s)}||z|^{\alpha-1}\left \|(z^{\alpha}+A)^{-1}Q^{1/2}\right \|_{\mathcal{L}_{2}}|dz|\right )^{2}ds\\
			\leq& C\int_{0}^{t_{n}}(t_{n}-s)^{\varrho_{1}}\left (\int_{\Gamma_{\theta,\kappa}\backslash\Gamma^{\tau}_{\theta,\kappa}}|e^{z(t_{n}-s)}||z|^{\alpha-1}\left \|(z^{\alpha}+A)^{-1}A^{\rho}\right \||dz|\right )^{2}ds\\
			\leq& C\int_{0}^{t_{n}}(t_{n}-s)^{\varrho_{1}}\left (\int_{\Gamma_{\theta,\kappa}\backslash\Gamma^{\tau}_{\theta,\kappa}}|e^{z(t_{n}-s)}||z|^{\rho\alpha-1}|dz|\right )^{2}ds\\
			\leq& C\int_{0}^{t_{n}}(t_{n}-s)^{\varrho_{1}}\int_{\Gamma_{\theta,\kappa}\backslash\Gamma^{\tau}_{\theta,\kappa}}|e^{2z(t_{n}-s)}||z|^{\varrho_{1}-\epsilon}|dz|\int_{\Gamma_{\theta,\kappa}\backslash\Gamma^{\tau}_{\theta,\kappa}}|z|^{-\varrho_{1}+2\rho\alpha-2+\epsilon}|dz|ds.\\
		\end{aligned}
	\end{equation*}
Taking $-\varrho_{1}+2\rho\alpha-2+\epsilon<-1$, i.e., $\varrho_{1}>2\rho\alpha-1+\epsilon$ and using the definitions of $\Gamma_{\theta,\kappa}$ and $\Gamma_{\theta,\kappa}^{\tau}$ lead to
	\begin{equation*}
		\vartheta_{1,1,1}^{2}\leq C\tau^{\varrho_{1}-2\rho\alpha+1-\epsilon}\int_{\Gamma_{\theta, \kappa}\backslash\Gamma^{\tau}_{\theta, \kappa}}|z|^{-1-\epsilon}ds\leq C\tau^{\varrho_{1}-2\rho\alpha+1} .
	\end{equation*}
	
	Similarly, for $\vartheta_{1,1,2}$, with the assumptions $\rho\in[0,1]$ and $2\rho\alpha-4H+\varrho_{1}+\epsilon<-1$, i.e., $\varrho_{1}<4H-2\rho\alpha-1-\epsilon$, there holds
	
	\begin{equation*}
		\begin{aligned}
			&\vartheta_{1,1,2}^{2}\\
			\leq& C\int_{0}^{t_{n}}(t_{n}-s)^{-\varrho_{1}}\left (\int_{\Gamma_{\theta,\kappa}\backslash\Gamma^{\tau}_{\theta,\kappa}}|e^{z(t_{n}-s)}||z|^{\alpha-2H}\left \|(z^{\alpha}+A)^{-1}Q^{1/2}\right \|_{\mathcal{L}_{2}}|dz|\right )^{2}ds\\
			\leq& C\int_{0}^{t_{n}}(t_{n}-s)^{-\varrho_{1}}\left (\int_{\Gamma_{\theta,\kappa}\backslash\Gamma^{\tau}_{\theta,\kappa}}|e^{z(t_{n}-s)}||z|^{\alpha-2H}\left \|(z^{\alpha}+A)^{-1}A^{\rho}\right \||dz|\right )^{2}ds\\
			\leq& C\int_{0}^{t_{n}}(t_{n}-s)^{-\varrho_{1}}\left (\int_{\Gamma_{\theta,\kappa}\backslash\Gamma^{\tau}_{\theta,\kappa}}|e^{z(t_{n}-s)}||z|^{\rho\alpha-2H}|dz|\right )^{2}ds\\
			\leq& C\int_{0}^{t_{n}}(t_{n}-s)^{-\varrho_{1}}\int_{\Gamma_{\theta,\kappa}\backslash\Gamma^{\tau}_{\theta,\kappa}}|e^{2z(t_{n}-s)}||z|^{-\varrho_{1}-\epsilon}|dz|ds\\
			&\quad\cdot\int_{\Gamma_{\theta,\kappa}\backslash\Gamma^{\tau}_{\theta,\kappa}}|z|^{2\rho\alpha-4H+\varrho_{1}+\epsilon}|dz|\\
			\leq& C\tau^{4H-2\rho\alpha-\varrho_{1}-1}.\\
		\end{aligned}
	\end{equation*}
Since $4H-2\rho\alpha-1-\epsilon>2\rho\alpha-1+\epsilon$ holds directly, thus
	\begin{equation*}
		\vartheta_{1,1}\leq C\tau^{2H-2\rho\alpha}.
	\end{equation*}

	To estimate $\vartheta_{1,2}$, we introduce
	\begin{equation*}
		\mathcal{E}_{\tau}(z)=z^{\alpha-1}(z^{\alpha}+A)^{-1}-(\delta_{\tau}(e^{-z\tau}))^{\alpha-1}((\delta_{\tau}(e^{-z\tau}))^{\alpha}+A)^{-1}\frac{z\tau}{e^{z\tau}-1}.
	\end{equation*}
	Similar to $\vartheta_{1,1}$, we also spit $\vartheta_{1,2}$ into two parts
	\begin{equation*}
		\begin{aligned}
			\vartheta_{1,2}\leq& C\left (\int_{0}^{t_{n}}(t_{n}-s)^{\varrho_{2}}\left \|\int_{\Gamma^{\tau}_{\theta,\kappa}}e^{z(t_{n}-s)}\mathcal{E}_{\tau}(z)Q^{1/2}dz\right \|_{\mathcal{L}_{2}}^{2}ds\right )^{1/2}\\
			&\cdot\left (\int_{0}^{t_{n}}(t_{n}-s)^{-\varrho_{2}}\left\| \int_{s}^{t_{n}}\int_{\Gamma^{\tau}_{\theta,\kappa}}e^{z(t_{n}-r)}\mathcal{E}_{\tau}(z)Q^{1/2}dz|r-s|^{2H-2}dr\right \|_{\mathcal{L}_{2}}^{2} ds\right )^{1/2}\\
			\leq& C\vartheta_{1,2,1}\cdot\vartheta_{1,2,2}.
		\end{aligned}
	\end{equation*}
For $\vartheta_{1,2,1}$, by the assumption $\|A^{-\rho}\|_{\mathcal{L}_{2}^{0}}<\infty$, we have
	\begin{equation*}
		\begin{aligned}
			\vartheta_{1,2,1}^{2}\leq& C\int_{0}^{t_{n}}(t_{n}-s)^{\varrho_{2}}\left (\int_{\Gamma^{\tau}_{\theta,\kappa}}|e^{z(t_{n}-s)}|\left \|\mathcal{E}_{\tau}(z)Q^{1/2}\right\|_{\mathcal{L}_{2}}|dz|\right )^{2} ds\\
			\leq& C\int_{0}^{t_{n}}(t_{n}-s)^{\varrho_{2}}\left (\int_{\Gamma^{\tau}_{\theta,\kappa}}|e^{z(t_{n}-s)}|\left \|\mathcal{E}_{\tau}(z)A^{\rho}\right\||dz|\right )^{2} ds.
		\end{aligned}
	\end{equation*}
	Here we first provide the  estimate 
	\begin{equation}\label{eqerrorH2l2}
		\begin{aligned}
			\left \|\mathcal{E}_{\tau}(z)A\right\|\leq& \left \|\left(z^{\alpha-1}(z^{\alpha}+A)^{-1}-(\delta_{\tau}(e^{-z\tau}))^{\alpha-1}((\delta_{\tau}(e^{-z\tau}))^{\alpha}+A)^{-1}\frac{z\tau}{e^{z\tau}-1}\right )A\right  \|\\			
			\leq &\left \|\left(z^{\alpha-1}(z^{\alpha}+A)^{-1}\frac{e^{z\tau}-1-z\tau}{e^{z\tau}-1}\right )A\right  \|\\
			&+\left \|\left(z^{\alpha-1}(z^{\alpha}+A)^{-1}-(\delta_{\tau}(e^{-z\tau}))^{\alpha-1}((\delta_{\tau}(e^{-z\tau}))^{\alpha}+A)^{-1}\right )\frac{z\tau}{e^{z\tau}-1}A\right  \|\\
			\leq& C\tau |z|^{\alpha},
		\end{aligned}
	\end{equation}
	which can be got by Lemma \ref{Lemseriesest} and the Taylor expansion. Similarly, we have
	\begin{equation}\label{eqerrorl2l2}
		\left \|\mathcal{E}_{\tau}(z)\right\|\leq C\tau.
	\end{equation}
	Thus according to Eqs. \eqref{eqerrorH2l2}, \eqref{eqerrorl2l2}, and the interpolation property, there holds
	\begin{equation}\label{eqerrorintres}
		\|\mathcal{E}_{\tau}(z)A^{\beta}\|\leq C\tau|z|^{\beta\alpha},
	\end{equation}
	where $\beta\in[0,1]$.
	 Assuming $\rho\in[0,1]$, $\varrho_{2}>-1$, and $2\rho\alpha-\varrho_{2}-\epsilon>-1$, i.e., $\varrho_{2}<2\rho\alpha+1+\epsilon$ and using \eqref{eqerrorintres} and $\kappa\leq\frac{\pi}{t_{n}|\sin(\theta)|} $ lead to
	\begin{equation*}
		\begin{aligned}
			\vartheta_{1,2,1}^{2}\leq&C\tau^{2}\int_{0}^{t_{n}}(t_{n}-s)^{\varrho_{2}}\left (\int_{\Gamma^{\tau}_{\theta,\kappa}}|e^{z(t_{n}-s)}||z|^{\rho\alpha}|dz|\right )^{2} ds\\
			\leq&C\tau^{2}\int_{0}^{t_{n}}(t_{n}-s)^{\varrho_{2}}\int_{\Gamma^{\tau}_{\theta,\kappa}}|e^{2z(t_{n}-s)}||z|^{\varrho_{2}+\epsilon}|dz| \int_{\Gamma^{\tau}_{\theta,\kappa}}|z|^{2\rho\alpha-\varrho_{2}-\epsilon}|dz|ds\\
			\leq&C\tau^{2}\int_{\Gamma^{\tau}_{\theta,\kappa}}|z|^{-1+\epsilon}|dz| \int_{\Gamma^{\tau}_{\theta,\kappa}}|z|^{2\rho\alpha-\varrho_{2}-\epsilon}|dz|\\
			\leq& C\tau^{1+\varrho_{2}-2\rho\alpha}.
		\end{aligned}
	\end{equation*}
	
	As for $\vartheta_{1,2,2}$, using the fact $\kappa\leq\frac{\pi}{t_{n}|\sin(\theta)|}$, there holds
	\begin{equation*}
		\begin{aligned}
			\vartheta_{1,2,2}^{2}\leq& C\int_{0}^{t_{n}}(t_{n}-s)^{-\varrho_{2}}\left (\int_{\Gamma^{\tau}_{\theta,\kappa}}|e^{z(t_{n}-s)}|\left \|\mathcal{E}_{\tau}(z)Q^{1/2}\right\|_{\mathcal{L}_{2}}|z|^{1-2H}|dz|\right )^{2} ds\\
			\leq& C\int_{0}^{t_{n}}(t_{n}-s)^{-\varrho_{2}}\left (\int_{\Gamma^{\tau}_{\theta,\kappa}}|e^{z(t_{n}-s)}|\left \|\mathcal{E}_{\tau}(z)A^{\rho}\right\||z|^{1-2H}|dz|\right )^{2} ds\\
			\leq&C\tau^{2}\int_{0}^{t_{n}}(t_{n}-s)^{-\varrho_{2}}\left (\int_{\Gamma^{\tau}_{\theta,\kappa}}|e^{z(t_{n}-s)}||z|^{1-2H+\rho\alpha}|dz|\right )^{2} ds\\
			\leq&C\tau^{2}\int_{0}^{t_{n}}(t_{n}-s)^{-\varrho_{2}}\int_{\Gamma^{\tau}_{\theta,\kappa}}|e^{2z(t_{n}-s)}||z|^{-\varrho_{2}+\epsilon}|dz| \int_{\Gamma^{\tau}_{\theta,\kappa}}|z|^{2-4H+2\rho\alpha+\varrho_{2}-\epsilon}|dz|ds\\
			\leq&C\tau^{4H-2\rho\alpha-1-\varrho_{2}},\\
		\end{aligned}
	\end{equation*}
	under the assumptions $\rho\in[0,1]$,  $\varrho_{2}<1$, and $2-4H+2\rho\alpha+\varrho_{2}-\epsilon>-1$, i.e., $\varrho_{2}>4H-3-2\rho\alpha-\epsilon$. Since $H\in(1/2,1)$ and $\rho\alpha\in(0,H)$, then $4H-3-2\rho\alpha-\epsilon<-1<1<2\rho\alpha+1-\epsilon$. Choosing a suitable $\varrho_{2}$ results in
	\begin{equation*}
		\vartheta_{1,2}\leq C\tau^{2H-2\rho\alpha}.
	\end{equation*}
Combining the above estimates of $\vartheta_{1,1}$ and $\vartheta_{1,2}$, the desired result is obtained.
\end{proof}

Further we provide the estimate of $\vartheta_{2}$.
\begin{lemma}\label{lemtime2}
	Let $\|A^{-\rho}\|_{\mathcal{L}^{0}_{2}}<\infty$ with $\rho\in [0,\frac{H}{\alpha})\cap[0,1]$.  Then $\vartheta_{2}$ defined in \eqref{equtimeerrorrep} satisfies the estimate
	\begin{equation*}
		\vartheta_{2}\leq C\tau^{2H-2\rho\alpha}.
	\end{equation*}
\end{lemma}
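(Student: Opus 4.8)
The structure mirrors that of Lemma~\ref{lemtime1}: rewrite $\vartheta_2$ as the second moment of a single stochastic integral against $dW^H_Q$, split it by Lemma~\ref{eqcorlem} and a weighted Cauchy--Schwarz inequality, and estimate the two resulting factors; the only new ingredient is a pointwise bound for the quadrature defect of $\bar{E}$. First I would eliminate the Riemann sum: on $(t_{j-1},t_j]$ one has $\bar{\partial}_{\tau}W^H_Q(s)\,ds=\tau^{-1}(W^H_Q(t_j)-W^H_Q(t_{j-1}))\,ds=\tau^{-1}\big(\int_{t_{j-1}}^{t_j}dW^H_Q\big)\,ds$, so by \eqref{eqrepsolfull} the Riemann-sum term in $\vartheta_2$ equals $u^n=\int_0^{t_n}\bar{E}_{j(s)}\,dW^H_Q(s)$, where $j(s)$ is the index with $s\in(t_{j(s)-1},t_{j(s)}]$ and $\bar{E}_j:=\tau^{-1}\int_{t_{j-1}}^{t_j}\bar{E}(t_n-\xi)\,d\xi$. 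Since the other term in $\vartheta_2$ is $\int_0^{t_n}\bar{E}(t_n-s)\,dW^H_Q(s)$, setting $F_\tau(s):=\bar{E}(t_n-s)-\bar{E}_{j(s)}$ gives
\begin{equation*}
\vartheta_2=\mathbb{E}\Big\|\int_0^{t_n}F_\tau(s)\,dW^H_Q(s)\Big\|_{\mathbb{H}}^2 .
\end{equation*}
Using $\|F_\tau(s)Q^{1/2}\|_{\mathcal{L}_2}\le\|F_\tau(s)A^\rho\|\,\|A^{-\rho}\|_{\mathcal{L}^0_2}$, Lemma~\ref{eqcorlem} together with Cauchy--Schwarz against a weight $(t_n-s)^{\pm\varrho_3}$ (exactly as for $\vartheta_1$) yields $\vartheta_2\le C\,\vartheta_{2,1}\vartheta_{2,2}$ with $\vartheta_{2,1}^2=\int_0^{t_n}(t_n-s)^{\varrho_3}\|F_\tau(s)A^\rho\|^2\,ds$ and $\vartheta_{2,2}^2=\int_0^{t_n}(t_n-s)^{-\varrho_3}\big(\int_s^{t_n}\|F_\tau(r)A^\rho\|\,|r-s|^{2H-2}\,dr\big)^2 ds$.

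The heart of the argument is a bound for $\|F_\tau(s)A^\rho\|$. Writing $F_\tau(s)=-\tau^{-1}\int_{t_{j(s)-1}}^{t_{j(s)}}\int_\xi^s\bar{E}'(t_n-\eta)\,d\eta\,d\xi$ with $\bar{E}'(t)=\frac{1}{2\pi\mathbf{i}}\int_{\Gamma^\tau_{\theta,\kappa}}ze^{zt}\tilde{\bar{E}}(z)\,dz$, I would first record, using Lemma~\ref{Lemseriesest} ($\delta_\tau(e^{-z\tau})\in\Sigma_\theta$, $|\delta_\tau(e^{-z\tau})|\sim|z|$, $|z\tau/(e^{z\tau}-1)|\le C$ on $\Gamma^\tau_{\theta,\kappa}$) and the resolvent estimate for $\rho\in[0,1]$, that $\|\tilde{\bar{E}}(z)A^\rho\|\le C|z|^{\rho\alpha-1}$. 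Because $\Gamma^\tau_{\theta,\kappa}$ is truncated at $|z|\sim\tau^{-1}$, integrating gives $\|\bar{E}(t)A^\rho\|\le C\min(t^{-\rho\alpha},\tau^{-\rho\alpha})$ and $\|\bar{E}'(t)A^\rho\|\le C\min(t^{-\rho\alpha-1},\tau^{-\rho\alpha-1})$ for $t\ge0$; inserting these into the double integral produces
\begin{equation*}
\|F_\tau(s)A^\rho\|\le C\min\!\big(\tau^{-\rho\alpha},\ \tau(t_n-s)^{-\rho\alpha-1}\big),\qquad 0<s<t_n ,
\end{equation*}
the first argument prevailing exactly when $t_n-s\le\tau$ (i.e.\ $s$ in the last subinterval).

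Finally I would substitute this bound. Splitting the $s$-integral at $t_n-\tau$ gives $\vartheta_{2,1}^2\le C\tau^{\varrho_3+1-2\rho\alpha}$ for $-1<\varrho_3<2\rho\alpha+1$. For $\vartheta_{2,2}$ the inner integral must be treated carefully near $r=t_n$: splitting it at $r=t_n-\tau$, bounding the near-endpoint part by subadditivity of $x\mapsto x^{2H-1}$ and the far part by a cut-off Beta-type integral (which converges at $r=s$ because $2H-2>-1$), one obtains $\int_s^{t_n}\|F_\tau(r)A^\rho\|\,|r-s|^{2H-2}\,dr\le C\tau^{1-\rho\alpha}(t_n-s)^{2H-2}$ when $t_n-s>\tau$ and $\le C\tau^{2H-1-\rho\alpha}$ when $t_n-s\le\tau$, whence $\vartheta_{2,2}^2\le C\tau^{4H-1-2\rho\alpha-\varrho_3}$ for $4H-3<\varrho_3<1$. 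The two admissible ranges for $\varrho_3$ overlap because $\rho\alpha\in[0,H)$ and $H<1$, and for any $\varrho_3$ in the overlap $\vartheta_2\le C\vartheta_{2,1}\vartheta_{2,2}\le C\tau^{(\varrho_3+1-2\rho\alpha)/2+(4H-1-2\rho\alpha-\varrho_3)/2}=C\tau^{2H-2\rho\alpha}$, as claimed.

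The main obstacle is the inner-integral estimate in $\vartheta_{2,2}$: a crude bound that does not retain the decay $(t_n-s)^{2H-2}$ forces $\varrho_3$ up to $1$ and only yields $\tau^{2H-2\rho\alpha-\epsilon}$, so keeping that decay --- via the split at $t_n-\tau$ and the cut-off Beta integral --- is precisely what makes the rate sharp. A secondary point is checking that the $\min$ structure in the bounds for $\bar{E}$ and $\bar{E}'$ genuinely originates from the truncation of $\Gamma^\tau_{\theta,\kappa}$; in the borderline case $\rho=0$ a harmless logarithmic factor appears and is absorbed.
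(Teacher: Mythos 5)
Your proposal is correct and reaches the sharp rate $\tau^{2H-2\rho\alpha}$. The skeleton is the same as the paper's: you keep the Riemann-sum structure of $\bar\partial_\tau W^H_Q$, identify the defect kernel $F_\tau(s)=\bar E(t_n-s)-\tau^{-1}\int_{t_{j(s)-1}}^{t_{j(s)}}\bar E(t_n-\xi)\,d\xi$, apply Lemma \ref{eqcorlem}, and split with the weight $(t_n-s)^{\pm\varrho_3}$ exactly as the paper does for its $\vartheta_{2,1}\cdot\vartheta_{2,2}$. Where you genuinely diverge is in the technical core: the paper never leaves the frequency domain --- it bounds $|e^{z(t_n-s)}-e^{z(t_n-\xi)}|$ by $C\tau|z||e^{z(t_n-s)}|$ inside the contour integral and then runs a second Cauchy--Schwarz on the contour itself, introducing the auxiliary exponents $\epsilon$ and $\sigma<0$ to balance the singularities; you instead extract the clean time-domain bound $\|F_\tau(s)A^{\rho}\|\le C\min\bigl(\tau^{-\rho\alpha},\,\tau(t_n-s)^{-\rho\alpha-1}\bigr)$ from $\|\tilde{\bar E}(z)A^\rho\|\le C|z|^{\rho\alpha-1}$ and the truncation of $\Gamma^\tau_{\theta,\kappa}$ at $|z|\sim\tau^{-1}$, and then evaluate elementary real integrals with a split at $t_n-\tau$. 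Your route is more transparent (the $\min$ structure makes the balancing of $\varrho_3$ in the nonempty window $(\max(-1,4H-3),\min(1,2\rho\alpha+1))$ explicit, and your inner-integral estimate retaining the factor $(t_n-s)^{2H-2}$ is exactly the step that keeps the rate sharp), at the cost of having to justify the kernel bounds $\|\bar E(t)A^\rho\|$ and $\|\bar E'(t)A^\rho\|$ separately; the paper's all-contour computation avoids introducing $\bar E'$ but is harder to follow because of the extra free parameters. One small point to tidy up: for $s$ in an interval $(t_{j-1},t_j]$ with $j<n$ you should note that $t_n-\eta\ge t_n-t_j\ge (t_n-s)/2$ for all $\eta$ between $\xi$ and $s$, so that the $\bar E'$ bound indeed converts into a bound in terms of $t_n-s$; and at the borderline $\rho=0$ the logarithmic factor you mention is consistent with the $\epsilon$-slack already present in the paper's own bookkeeping.
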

\begin{proof}
	For $\vartheta_{2}$, simple calculations lead to
	\begin{equation*}
		\begin{aligned}
			\vartheta_{2}\leq &C \mathbb{E}\left \|\sum_{i=1}^{n}\int_{t_{i-1}}^{t_{i}}\bar{E}(t_{n}-s)\left (dW^{H}_{Q}(s)-\bar{\partial}_{\tau}W^{H}_{Q}(s)ds\right )\right \|_{\mathbb{H}}^{2}\\
			\leq &C \mathbb{E}\left \|\sum_{i=1}^{n}\int_{t_{i-1}}^{t_{i}}\frac{1}{\tau}\int_{t_{i-1}}^{t_{i}}(\bar{E}(t_{n}-s)-\bar{E}(t_{n}-\xi))d\xi dW^{H}_{Q}(s)\right \|_{\mathbb{H}}^{2}\\
			\leq &C \mathbb{E}\left \|\int_{0}^{t_{n}}\frac{1}{\tau}\sum_{i=1}^{n}\chi_{(t_{i-1},t_{i}]}(s)\int_{t_{i-1}}^{t_{i}}(\bar{E}(t_{n}-s)-\bar{E}(t_{n}-\xi))d\xi dW^{H}_{Q}(s)\right \|_{\mathbb{H}}^{2},
		\end{aligned}
	\end{equation*}
	where $\chi_{(a,b]}(x)$ means  the characteristic function on $(a,b]$.
	Using Lemma \ref{eqcorlem}, we can split $\vartheta_{2}$ into two parts
	\begin{equation*}
		\begin{aligned}
			&\vartheta_{2}\\
			\leq& \int_{0}^{t_{n}}\int_{0}^{t_{n}}\left \langle\frac{1}{\tau}\sum_{i=1}^{n}\chi_{(t_{i-1},t_{i}]}(s)\int_{t_{i-1}}^{t_{i}}(\bar{E}(t_{n}-s)-\bar{E}(t_{n}-\xi))Q^{1/2}d\xi,\right .\\
			&\left .\frac{1}{\tau}\sum_{i=1}^{n}\chi_{(t_{i-1},t_{i}]}(r)\int_{t_{i-1}}^{t_{i}}(\bar{E}(t_{n}-r)-\bar{E}(t_{n}-\xi))Q^{1/2}d\xi\right \rangle|r-s|^{2H-2} dr ds\\
			\leq& \int_{0}^{t_{n}}\left \langle\frac{1}{\tau}\sum_{i=1}^{n}\chi_{(t_{i-1},t_{i}]}(s)\int_{t_{i-1}}^{t_{i}}(\bar{E}(t_{n}-s)-\bar{E}(t_{n}-\xi))Q^{1/2}d\xi,\right .\\
			&\left .\int_{s}^{t_{n}}\frac{1}{\tau}\sum_{i=1}^{n}\chi_{(t_{i-1},t_{i}]}(r)\int_{t_{i-1}}^{t_{i}}(\bar{E}(t_{n}-r)-\bar{E}(t_{n}-\xi))Q^{1/2}d\xi |r-s|^{2H-2}dr\right \rangle  ds\\
			\leq& \left (\int_{0}^{t_{n}}(t_{n}-s)^{\varrho_{3}}\left \|\frac{1}{\tau}\sum_{i=1}^{n}\chi_{(t_{i-1},t_{i}]}(s)\int_{t_{i-1}}^{t_{i}}(\bar{E}(t_{n}-s)-\bar{E}(t_{n}-\xi))Q^{1/2}d\xi\right \|_{\mathcal{L}_{2}}^{2}ds\right )^{1/2}\\
			&\cdot \left (\int_{0}^{t_{n}}(t_{n}-s)^{-\varrho_{3}}\left\| \int_{s}^{t_{n}}\frac{1}{\tau}\sum_{i=1}^{n}\chi_{(t_{i-1},t_{i}]}(r)\right. \right.\\
			&\left .\left .\qquad\qquad\cdot \int_{t_{i-1}}^{t_{i}}(\bar{E}(t_{n}-r)-\bar{E}(t_{n}-\xi))Q^{1/2}d\xi |r-s|^{2H-2}dr\right \|_{\mathcal{L}_{2}}^{2} ds\right )^{1/2}\\
			\leq& C\vartheta_{2,1}\cdot\vartheta_{2,2}.
		\end{aligned}
	\end{equation*}
	
	For $\vartheta_{2,1}$, using the resolvent estimate \eqref{equresolvent} and Lemma \ref{Lemseriesest} gives
	\begin{equation*}
		\begin{aligned}
			&\left(\frac{1}{\tau}\sum_{i=1}^{n}\chi_{(t_{i-1},t_{i}]}(s)\int_{t_{i-1}}^{t_{i}}\left \|(\bar{E}(t_{n}-s)-\bar{E}(t_{n}-\xi))Q^{1/2}\right\|_{\mathcal{L}_{2}}d\xi\right)^{2} \\
			\leq&C\left (\frac{1}{\tau}\sum_{i=1}^{n}\chi_{(t_{i-1},t_{i}]}(s)\int_{t_{i-1}}^{t_{i}}\left \|\int_{\Gamma^{\tau}_{\theta,\kappa}}(e^{z(t_{n}-s)}-e^{z(t_{n}-\xi)})(\delta_{\tau}(e^{-z\tau}))^{\alpha-1}\right.\right.\\
			&\qquad\qquad\qquad\qquad\left .\left .\cdot((\delta_{\tau}(e^{-z\tau}))^{\alpha}+A)^{-1}\frac{z\tau}{e^{z\tau}-1}A^{\rho}dz\right\|d\xi \right)^{2}\\
			\leq&C\tau^{2}\left(\int_{\Gamma^{\tau}_{\theta,\kappa}}|e^{z(t_{n}-s)}||z|^{\rho\alpha}|dz|\right)^{2} .\\
			\end{aligned}
			\end{equation*}
			Thus
			\begin{equation*}
			\begin{aligned}
			\vartheta_{2,1}^{2}\leq&C\tau^{2}\int_{0}^{t_{n}}(t_{n}-s)^{\varrho_{3}}\int_{\Gamma^{\tau}_{\theta,\kappa}}|e^{2z(t_{n}-s)}||z|^{\varrho_{3}-\epsilon}|dz|\int_{\Gamma^{\tau}_{\theta,\kappa}}|z|^{-\varrho_{3}+2\rho\alpha+\epsilon}|dz| ds\\
			\leq& C\tau^{1+\rho_{3}-2\rho\alpha-\epsilon},
		\end{aligned}
	\end{equation*}
	where we use $\kappa\leq\frac{\pi}{t_{n}|\sin(\theta)|}$ and take $-\rho_{3}+2\rho\alpha+\epsilon>-1$, i.e., $\rho_{3}<2\rho\alpha+\epsilon+1$.

	Similar to the estimate of $\vartheta_{2,1}$,
	we have

	\begin{equation*}
		\begin{aligned}
			&\left\| \int_{s}^{t_{n}}\frac{1}{\tau}\sum_{i=1}^{n}\chi_{(t_{i-1},t_{i}]}(r)\int_{t_{i-1}}^{t_{i}}(\bar{E}(t_{n}-r)-\bar{E}(t_{n}-\xi))Q^{1/2}d\xi |r-s|^{2H-2}dr\right \|_{\mathcal{L}_{2}}^{2}\\
			\leq &C\left( \int_{s}^{t_{n}}\frac{1}{\tau}\sum_{i=1}^{n}\chi_{(t_{i-1},t_{i}]}(r)\int_{t_{i-1}}^{t_{i}}\left\|(\bar{E}(t_{n}-r)-\bar{E}(t_{n}-\xi))Q^{1/2}\right\|_{\mathcal{L}_{2}}d\xi |r-s|^{2H-2}dr\right )^{2}\\
			\leq &C\left( \int_{s}^{t_{n}}\frac{1}{\tau}\sum_{i=1}^{n}\chi_{(t_{i-1},t_{i}]}(r)\int_{t_{i-1}}^{t_{i}}\int_{\Gamma^{\tau}_{\theta,\kappa}}\left |e^{z(t_{n}-r)}-e^{z(t_{n}-\xi)}\right |\right .\\
			&\left .\qquad\cdot\left\|((\delta_{\tau}(e^{-z\tau}))^{\alpha-1}(((\delta_{\tau}(e^{-z\tau}))^{\alpha}+A)^{-1}Q^{1/2}\right\|_{\mathcal{L}_{2}}d\xi |r-s|^{2H-2}dr\right )^{2}\\
			\leq &C\tau^{2}\left( \int_{0}^{t_{n}-s}\int_{\Gamma^{\tau}_{\theta,\kappa}}\left |e^{z(t_{n}-s-r)}\right ||z|^{\rho\alpha}|dz| |r|^{2H-2}dr\right )^{2}\\
			\leq &C\tau^{2} \int_{0}^{t_{n}-s}\left(\int_{\Gamma^{\tau}_{\theta,\kappa}}\left |e^{z(t_{n}-s-r)}\right ||z|^{\rho\alpha}|dz|\right )^{2}|r|^{-1+\epsilon}dr\int_{0}^{t_{n}-s} |r|^{4H-3-\epsilon}dr\\
			\leq &C\tau^{2}(t_{n}-s)^{4H-2-\epsilon}\\
			&\qquad\qquad\cdot \int_{0}^{t_{n}-s}\int_{\Gamma^{\tau}_{\theta,\kappa}}\left |e^{2z(t_{n}-s-r)}\right ||z|^{1+\sigma+2\rho\alpha}|dz|\int_{\Gamma^{\tau}_{\theta,\kappa}}|z|^{-1-\sigma}|dz||r|^{-1+\epsilon}dr\\
			\leq &C\tau^{2+\sigma}(t_{n}-s)^{4H-2-\epsilon} \int_{0}^{t_{n}-s}\int_{\Gamma^{\tau}_{\theta,\kappa}}\left |e^{2z(t_{n}-s-r)}\right ||z|^{1+\sigma+2\rho\alpha-\epsilon}|dz||r|^{-1+\epsilon}dr\\
			\leq &C\tau^{2+\sigma}(t_{n}-s)^{4H-2-2-\sigma-2\rho\alpha+\epsilon}, \\
		\end{aligned}
	\end{equation*}
	where we use the assumption that $\sigma<0$ and $1+\sigma+2\rho\alpha-\epsilon<0$, i.e., $\sigma<-1-2\rho\alpha+\epsilon$. To preserve the boundedness of $\vartheta_{2,2}$, $\sigma$ should satisfy $-\varrho_{3}+4H-4-\sigma-2\rho\alpha+\epsilon>-1$, i.e., $\sigma<4H-3-2\rho\alpha-\varrho_{3}+\epsilon$. Thus the desired result is obtained.
	\end{proof}

Combining Lemmas \ref{lemtime1} and \ref{lemtime2}, we have
\begin{theorem}
	Let $u(t_{n})$ and $u^{n}$ be the solutions of Eqs. \eqref{equretosol} and \eqref{eqfullscheme} and $\|A^{-\rho}\|_{\mathcal{L}^{0}_{2}}<\infty$ with $\rho\in[0,\frac{H}{\alpha})\cap[0,1]$. Then there holds 
	\begin{equation*}
		\left (\mathbb{E}\|u(t_{n})-u^{n}\|_{\mathbb{H}}^{2}\right )^{1/2}\leq C\tau^{H-\rho\alpha}.
	\end{equation*}
\end{theorem}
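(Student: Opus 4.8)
The plan is to obtain the estimate directly from the error splitting already prepared in \eqref{equtimeerrorrep} together with the two lemmas just proved. Using the representations \eqref{eqrepsol} for $u(t_n)$ and \eqref{eqrepsolfull} for $u^n$, the error is decomposed as $\mathbb{E}\|u(t_n)-u^n\|_{\mathbb{H}}^{2}\le \vartheta_{1}+\vartheta_{2}$, where $\vartheta_{1}$ is the contribution of replacing the continuous solution operator $E$ by its convolution-quadrature analogue $\bar E$ (still integrated against the exact noise $dW^{H}_{Q}$), and $\vartheta_{2}$ is the contribution of replacing the noise differential $dW^{H}_{Q}(s)$ by the piecewise-constant backward difference $\bar\partial_{\tau}W^{H}_{Q}(s)\,ds$ (integrated against $\bar E$). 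Both pieces turn out to be of the same order in $\tau$.

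The assembly is then immediate: Lemma \ref{lemtime1} gives $\vartheta_{1}\le C\tau^{2H-2\rho\alpha}$ and Lemma \ref{lemtime2} gives $\vartheta_{2}\le C\tau^{2H-2\rho\alpha}$, so that $\mathbb{E}\|u(t_n)-u^n\|_{\mathbb{H}}^{2}\le C\tau^{2H-2\rho\alpha}$, and taking square roots yields the claimed $\mathcal{O}(\tau^{H-\rho\alpha})$ bound. The hypothesis $\rho\in[0,\tfrac{H}{\alpha})\cap[0,1]$ is exactly what both lemmas require and is precisely what makes the exponent $H-\rho\alpha$ positive, so no further case analysis is needed.

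In this final step there is essentially no obstacle; the real work sits in the two lemmas, which one is entitled to assume here. If one wanted to point at the crux of the whole argument, it would be two things inside those lemmas: for $\vartheta_{1}$, the operator error estimate $\|\mathcal{E}_{\tau}(z)A^{\beta}\|\le C\tau|z|^{\beta\alpha}$ for $\beta\in[0,1]$, built from \eqref{eqerrorH2l2}, \eqref{eqerrorl2l2}, Lemma \ref{Lemseriesest} and interpolation, together with the splitting of the contour $\Gamma_{\theta,\kappa}\setminus\Gamma^{\tau}_{\theta,\kappa}$; and for $\vartheta_{2}$, the uniform-in-$\tau$ control of $\bar E(t_n-s)-\bar E(t_n-\xi)$ over each subinterval $(t_{i-1},t_i]$. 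In both lemmas the singular temporal kernel $|r-s|^{2H-2}$, integrable because $H>1/2$, has to be absorbed through a Cauchy--Schwarz splitting with a free weight exponent (the $\varrho_{1},\varrho_{2},\varrho_{3}$ appearing there), chosen so that the two resulting integral factors are simultaneously finite; checking that the admissible range of that exponent is nonempty under $H\in(1/2,1)$ and $\rho\alpha\in(0,H)$ is the delicate bookkeeping. Once those estimates are granted, the present theorem follows by one addition and one square root.
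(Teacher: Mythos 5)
Your proposal is correct and follows exactly the paper's route: the paper also obtains this theorem by combining the decomposition \eqref{equtimeerrorrep} with Lemmas \ref{lemtime1} and \ref{lemtime2} and taking square roots. Your additional remarks on where the real difficulty lies (the contour/operator estimates and the choice of the free weight exponents) accurately reflect the content of those lemmas.
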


On the other hand, we use the same way to discretize Eq. \eqref{equretosol1} and get
\begin{equation}\label{eqfullscheme1}
\frac{v^{n}-v^{n-1}}{\tau}+\sum_{i=0}^{n-1}d^{(1-\alpha)}_{i}Av^{n-i}=0
\end{equation}
with $v^{0}=G_{0}$. Combining the results provided in \cite{jin2016-1}, we have the estimate of $G$.
\begin{theorem}\label{thmGtime}
	Let $G$ be the solution of Eq. \eqref{equretosol0} and $G^{n}=u^{n}+v^{n}$, where $u^{n}$ and $v^{n}$ are the solutions of Eqs. \eqref{eqfullscheme} and \eqref{eqfullscheme1}, respectively. Let $\|A^{-\rho}\|_{\mathcal{L}^{0}_{2}}<\infty$ with $\rho\in[0,\frac{H}{\alpha})\cap[0,1]$. Assume $G_{0}\in\hat{H}^{q} (D) $ with $q\leq 2$. There holds
	\begin{equation*}
		(\mathbb{E}\|G(t_{n})-G^{n}\|_{\mathbb{H}}^{2})^{1/2}\leq C\tau^{H-\rho\alpha }+Ct^{q\alpha/2-1}\tau\|G_{0}\|_{\hat{H}^{q} (D)}.
	\end{equation*}
\end{theorem}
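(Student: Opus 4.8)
The plan is to exploit the linear splitting $G=u+v$ from \eqref{equretosol1}--\eqref{equretosol}, together with the matching decomposition $G^{n}=u^{n}+v^{n}$ of the fully time-discrete solution, and to estimate the stochastic and deterministic contributions separately. Since $v(t_{n})$ and $v^{n}$ are deterministic, $\mathbb{E}\|v(t_{n})-v^{n}\|_{\mathbb{H}}^{2}=\|v(t_{n})-v^{n}\|_{\mathbb{H}}^{2}$, so the triangle inequality in $L^{2}(\Omega;\mathbb{H})$ gives
\begin{equation*}
	(\mathbb{E}\|G(t_{n})-G^{n}\|_{\mathbb{H}}^{2})^{1/2}\leq (\mathbb{E}\|u(t_{n})-u^{n}\|_{\mathbb{H}}^{2})^{1/2}+\|v(t_{n})-v^{n}\|_{\mathbb{H}}.
\end{equation*}
The first term on the right is exactly what is controlled by the theorem obtained above from Lemmas \ref{lemtime1} and \ref{lemtime2}, which under $\|A^{-\rho}\|_{\mathcal{L}^{0}_{2}}<\infty$ with $\rho\in[0,H/\alpha)\cap[0,1]$ yields the $C\tau^{H-\rho\alpha}$ contribution. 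So only the deterministic term remains.

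For that term I would first identify $v^{n}$ with the standard backward Euler convolution quadrature approximation of the subdiffusion semigroup. Taking Laplace transforms in \eqref{equretosol1} gives $\tilde v(z)=z^{\alpha-1}(z^{\alpha}+A)^{-1}G_{0}=\tilde E(z)G_{0}$, hence $v(t)=E(t)G_{0}$; running the same generating-function manipulation that produced \eqref{eqrepsolfull} on the scheme \eqref{eqfullscheme1} gives $v^{n}=\bar E(t_{n})G_{0}$ with the same discrete operator $\bar E$. Therefore
\begin{equation*}
	\|v(t_{n})-v^{n}\|_{\mathbb{H}}=\|(E(t_{n})-\bar E(t_{n}))G_{0}\|_{\mathbb{H}}\leq C\tau\, t_{n}^{q\alpha/2-1}\|G_{0}\|_{\hat H^{q}(D)},\qquad 0\leq q\leq 2,
\end{equation*}
which is precisely the classical nonsmooth-data error bound for first-order convolution quadrature of fractional-order evolution equations proved in \cite{jin2016-1} (see also \cite{cuesta_2006-1,lubich_1988-1}); it follows by writing $E(t_{n})-\bar E(t_{n})$ as a contour integral of the kernel $\mathcal{E}_{\tau}(z)$ and invoking the estimates of Lemma \ref{Lemseriesest}. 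Substituting the two bounds into the triangle inequality above produces the asserted estimate.

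There is no genuine obstacle in this argument, since both ingredients are already in hand; the only point deserving care is the bookkeeping needed to check that the deterministic part $v^{n}$ of \eqref{eqfullscheme1} matches verbatim the convolution quadrature scheme analysed in \cite{jin2016-1} (same discretization $\delta_{\tau}$ of $\partial_{t}$ and of ${}_{0}\partial_{t}^{1-\alpha}$, same initial datum $v^{0}=G_{0}$), and that the admissible smoothness range $q\in[0,2]$ there is consistent with the hypothesis $G_{0}\in\hat H^{q}(D)$ in the statement. Once this identification is granted, the combination of the stochastic estimate and the deterministic estimate closes the proof.
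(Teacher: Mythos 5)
Your argument is correct and is essentially the paper's own: the paper proves the stochastic bound $(\mathbb{E}\|u(t_n)-u^n\|_{\mathbb{H}}^2)^{1/2}\leq C\tau^{H-\rho\alpha}$ via Lemmas \ref{lemtime1} and \ref{lemtime2} and then simply states that Theorem \ref{thmGtime} follows by combining this with the deterministic convolution-quadrature error estimate from \cite{jin2016-1} applied to \eqref{eqfullscheme1}. Your write-up just makes explicit the triangle inequality and the identification $v(t)=E(t)G_0$, $v^n=\bar E(t_n)G_0$ that the paper leaves implicit.
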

\section{Spatial discretization and error analysis}
In this section, we discretize Laplace operator by the finite element method and provide the error estimates for the fully discrete scheme of Eq. \eqref{equretosol}. Let $\mathcal{T}_h$ be a shape regular quasi-uniform partition of the domain $D$, where $h$ is the maximum diameter. Denote $ X_h $ as piecewise linear finite element space
\begin{equation*}
X_{h}=\{\nu_h\in C(\bar{D}): \nu_h|_\mathbf{T}\in \mathcal{P}^1,\  \forall \mathbf{T}\in\mathcal{T}_h,\ \nu_h|_{\partial D}=0\},
\end{equation*}
where $\mathcal{P}^1$ denotes the set of piecewise polynomials of degree $1$ over $\mathcal{T}_h$. Then, we denote $(\cdot,\cdot)$ as the $L^{2}$ inner product and introduce the $ L^2 $-orthogonal projection $ P_h:\ L^2(D)\rightarrow X_h $ and the Ritz projection $ R_h:\ H^1_0(D)\rightarrow X_h $ \cite{Thomee_2006}, respectively, by
\begin{equation*}
\begin{aligned}
&(P_hu,\nu_h)=(u,\nu_h) \ \quad\forall \nu_h\in X_h,\\
&(\nabla R_h u,\nabla \nu_h)=(\nabla u, \nabla \nu_h) \ \quad\forall \nu_h\in X_h.
\end{aligned}
\end{equation*}
Define $A_h$ by $(A_hu_{h},\nu_{h})=(\nabla u_{h}, \nabla \nu_{h})$ with $u_{h},\nu_{h}\in X_{h}$. The fully discrete Galerkin scheme for Eq. \eqref{equretosol} reads: For every $t\in (0,T]$, find $ u^{n}_{h}\in X_h$ such that
\begin{equation}\label{eqsemischeme}
\left \{
\begin{aligned}
&\left(\frac{u^{n}_h-u^{n-1}_{h}}{\tau},\nu_{h}\right)+\sum_{i=0}^{n-1}d^{(1-\alpha)}_{i}(A_hu^{n-i}_h,\nu_{h})=\\
&\qquad\qquad\left (\frac{W^{H}_{Q}(t_{n})-W^{H}_{Q}(t_{n-1})}{\tau},\nu_{h}\right )\quad \forall \nu_{h}\in X_h,
\\
&u^{0}_h=0. 
\\
\end{aligned}
\right.
\end{equation}
Equation \eqref{eqsemischeme} can also  be written as
\begin{equation}\label{eqsemischemeop}
	\frac{u^{n}_h-u^{n-1}_{h}}{\tau}+\sum_{i=0}^{n-1}d^{(1-\alpha)}_{i}A_hu^{n-i}_h=P_{h}\frac{W^{H}_{Q}(t_{n})-W^{H}_{Q}(t_{n-1})}{\tau}.
\end{equation}
Introduce 
\begin{equation*}
	\bar{E}_{h}(t)=\frac{1}{2\pi\mathbf{i}}\int_{\Gamma^{\tau}_{\theta,\kappa}}e^{zt}(\delta_{\tau}(e^{-z\tau}))^{\alpha-1}((\delta_{\tau}(e^{-z\tau}))^{\alpha}+A_{h})^{-1}\frac{z\tau}{e^{z\tau}-1}dz.
\end{equation*}
Multiplying $\zeta^{n}$ on both sides of \eqref{eqsemischemeop} and summing $n$ from $1$ to $\infty$ lead to that the representation of the solution of \eqref{eqsemischeme} is
\begin{equation}\label{eqrepsolnum}
	\begin{aligned}
		u^{n}_{h}=&\int_{0}^{t_{n}}\bar{E}_{h}(t_{n}-s)P_{h}\bar{\partial}_{\tau}W^{H}_{Q}(s)ds\\
		=&\frac{1}{\tau}\sum_{i=1}^{n}\int_{t_{i-1}}^{t_{i}}\bar{E}_{h}(t_{n}-\xi)P_{h}\int_{t_{i-1}}^{t_{i}}dW^{H}_{Q}(s)d\xi\\
		=&\int_{0}^{t_{n}}\frac{1}{\tau}\sum_{i=1}^{n}\chi_{(t_{i-1},t_{i}]}(s)\int_{t_{i-1}}^{t_{i}}\bar{E}_{h}(t_{n}-\xi)P_{h}d\xi dW^{H}_{Q}(s).
	\end{aligned}
\end{equation}
Similarly, the solution of \eqref{eqfullscheme} can be represented
as
\begin{equation}\label{eqrepsolfull2}
	u^{n}=\int_{0}^{t_{n}}\frac{1}{\tau}\sum_{i=1}^{n}\chi_{(t_{i-1},t_{i}]}(s)\int_{t_{i-1}}^{t_{i}}\bar{E}(t_{n}-\xi)d\xi dW^{H}_{Q}(s).
\end{equation}
Denoting $\mathcal{E}_{h}(t)=\bar{E}(t)-\bar{E}_{h}(t)P_{h}$ and  taking the expectation of $\|u^{n}-u^{n}_{h}\|^{2}_{\mathbb{H}}$ imply
	\begin{equation}\label{eqspterrep0}
		\begin{aligned}
			\mathbb{E}\|u^{n}-u^{n}_{h}\|_{\mathbb{H}}^{2}=&\mathbb{E}\left \|\int_{0}^{t_{n}}\frac{1}{\tau}\sum_{i=1}^{n}\chi_{(t_{i-1},t_{i}]}(s)\int_{t_{i-1}}^{t_{i}}\bar{E}(t_{n}-\xi)-\bar{E}_{h}(t_{n}-\xi)P_{h}d\xi dW^{H}_{Q}(s)\right \|_{\mathbb{H}}^{2}\\
			=&\mathbb{E}\left \|\int_{0}^{t_{n}}\frac{1}{\tau}\sum_{i=1}^{n}\chi_{(t_{i-1},t_{i}]}(s)\int_{t_{i-1}}^{t_{i}}\mathcal{E}_{h}(t_{n}-\xi)d\xi dW^{H}_{Q}(s)\right \|_{\mathbb{H}}^{2}.
		\end{aligned}
	\end{equation}
	To get the estimate of $	\mathbb{E}\|u-u_{h}\|_{\mathbb{H}}^{2}$, we consider some estimates of $\mathcal{E}_{h}(t)$ first. From \cite{jin2019-1,Thomee_2006}, it holds 
\begin{equation}\label{eql2l21}
	\|\tilde{\mathcal{E}}_{h}(z)\|\leq C|z|^{\alpha-1}h^{2}\quad {\rm for}~ z\in \Gamma^{\tau}_{\theta, \kappa}.
\end{equation}
Using the resolvent estimate \eqref{equresolvent}, the following estimate also holds
\begin{equation}\label{eql2l22}
	\|\tilde{\mathcal{E}}_{h}(z)\|\leq C|z|^{-1}\quad {\rm for}~ z\in \Gamma^{\tau}_{\theta, \kappa}.
\end{equation}
From \cite{jin2019-1,kovacs_2014}, we have for $z\in \Gamma^{\tau}_{\theta, \kappa}$,
\begin{equation}\label{eqH-2l2}
\|\tilde{\mathcal{E}}_{h}(z)\|_{\hat{H}^{-1}(D)\rightarrow L^{2}(D)}\leq\left\{
\begin{aligned}
&Ch|z|^{\alpha-1},\\
&C|z|^{\alpha/2-1}.
\end{aligned}\right .
\end{equation}
On the other hand, introducing $\mathbb{I}$ as  the identity operator, we have, for $v\in \hat{H}^{2}(D)$ and  $z\in \Gamma^{\tau}_{\theta, \kappa}$,
\begin{equation*}
	\begin{aligned}
		&\|\tilde{\bar{E}}(z)-\tilde{\bar{E}}_{h}(z)R_{h}v\|_{ \mathbb{H}}\\
		\leq &\left \|(\mathbb{I}-A((\delta_{\tau}(e^{-z\tau}))^{\alpha}+A)^{-1}\right. \\
		&~~\left. -R_{h}+A_{h}((\delta_{\tau}(e^{-z\tau}))^{\alpha}+A_{h})^{-1}R_{h})\frac{z\tau(\delta_{\tau}(e^{-z\tau}))^{-1}}{e^{z\tau}-1}v\right \|_{ \mathbb{H}}\\
		\leq &Ch^{2}|z|^{-1}\|Av\|_{\mathbb{H}},
	\end{aligned}
\end{equation*}
which leads to
\begin{equation*}
	\begin{aligned}
	\|\tilde{\mathcal{E}}_{h}(z)v\|_{\mathbb{H}}=&\|\tilde{\bar{E}}(z)v-\tilde{\bar{E}}_{h}(z)R_{h}v\|_{ \mathbb{H}}\\
	&+\|\tilde{\bar{E}}_{h}(z)R_{h}v-\tilde{\bar{E}}_{h}(z)P_{h}v\|_{ \mathbb{H}}\\
	=&\|\tilde{\bar{E}}(z)v-\tilde{\bar{E}}_{h}(z)R_{h}v\|_{ \mathbb{H}}+Ch^{2}\|\tilde{\bar{E}}_{h}(z)Av\|_{\mathbb{H}}\\
	\leq& Ch^{2}|z|^{-1}\|Av\|_{\mathbb{H}};
	\end{aligned}
\end{equation*}
that is
\begin{equation}\label{eqH2l2}
	\|\tilde{\mathcal{E}}_{h}(z)\|_{\hat{H}^{2}(D)\rightarrow L^{2}(D)}\leq Ch^{2}|z|^{-1}\quad {\rm for}~ z\in \Gamma^{\tau}_{\theta, \kappa}.
\end{equation}
\begin{theorem}\label{thmuspace}
	Let $u^{n}$ and $u^{n}_{h}$ be the solutions of Eqs. \eqref{eqfullscheme} and \eqref{eqsemischeme}, respectively. For $\rho\in[-1,\frac{1}{2}]$, there holds
	\begin{equation*}
			(\mathbb{E}\|u^{n}-u^{n}_{h}\|_{\mathbb{H}}^{2})^{1/2}\leq Ch^{\min(2-2\rho,\frac{2H}{\alpha}-2\rho,2)}.
	\end{equation*}
\end{theorem}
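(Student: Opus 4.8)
The plan is to follow the pattern of Lemmas~\ref{lemtime1} and \ref{lemtime2}, now with the space–error kernel $\mathcal{E}_{h}(t)=\bar{E}(t)-\bar{E}_{h}(t)P_{h}$ in the role of the time–error kernels, and to extract the spatial rate from the interpolated resolvent bounds \eqref{eql2l21}--\eqref{eqH2l2}. Starting from the identity \eqref{eqspterrep0}, I would apply Lemma~\ref{eqcorlem} to rewrite $\mathbb{E}\|u^{n}-u^{n}_{h}\|^{2}_{\mathbb{H}}$ as $H(2H-1)\int_{0}^{t_{n}}\int_{0}^{t_{n}}\langle\Psi(s)Q^{1/2},\Psi(r)Q^{1/2}\rangle|r-s|^{2H-2}\,dr\,ds$ with the piecewise constant $\Psi(s)=\frac{1}{\tau}\sum_{i=1}^{n}\chi_{(t_{i-1},t_{i}]}(s)\int_{t_{i-1}}^{t_{i}}\mathcal{E}_{h}(t_{n}-\xi)\,d\xi$, and then split it, exactly as in those lemmas, by the Cauchy--Schwarz inequality with a weight $(t_{n}-s)^{\pm\varrho}$ into a product $C\,\Xi_{1}\,\Xi_{2}$, where $\Xi_{1}^{2}=\int_{0}^{t_{n}}(t_{n}-s)^{\varrho}\|\Psi(s)Q^{1/2}\|^{2}_{\mathcal{L}_{2}}\,ds$ and $\Xi_{2}^{2}=\int_{0}^{t_{n}}(t_{n}-s)^{-\varrho}\big\|\int_{s}^{t_{n}}\Psi(r)Q^{1/2}|r-s|^{2H-2}\,dr\big\|^{2}_{\mathcal{L}_{2}}\,ds$, the exponent $\varrho$ being chosen at the very end.

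Next I would pass to contour integrals. Because $|z|\tau$ is bounded on $\Gamma^{\tau}_{\theta,\kappa}$ (Lemma~\ref{Lemseriesest}) and $|(t_{n}-\xi)-(t_{n}-s)|\le\tau$ on each subinterval, one has $|e^{z(t_{n}-\xi)}|\le C|e^{z(t_{n}-s)}|$ there, so up to a constant $\Psi(s)Q^{1/2}$ may be replaced by $\frac{1}{2\pi\mathbf{i}}\int_{\Gamma^{\tau}_{\theta,\kappa}}e^{z(t_{n}-s)}\tilde{\mathcal{E}}_{h}(z)Q^{1/2}\,dz$ inside $\mathcal{L}_{2}$–norms and, via the convolution theorem, $\int_{s}^{t_{n}}\Psi(r)Q^{1/2}|r-s|^{2H-2}\,dr$ by the same integral carrying the extra factor $z^{1-2H}$ (coming from $\int_{0}^{\infty}r^{2H-2}e^{-zr}\,dr$). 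Factoring $\tilde{\mathcal{E}}_{h}(z)Q^{1/2}=\tilde{\mathcal{E}}_{h}(z)A^{\rho}\cdot A^{-\rho}Q^{1/2}$ and using the standing hypothesis $\|A^{-\rho}Q^{1/2}\|_{\mathcal{L}_{2}}=\|A^{-\rho}\|_{\mathcal{L}^{0}_{2}}<\infty$, everything reduces to bounding $\|\tilde{\mathcal{E}}_{h}(z)A^{\rho}\|$ on $\Gamma^{\tau}_{\theta,\kappa}$. Interpolating among \eqref{eql2l21}, \eqref{eql2l22}, \eqref{eqH-2l2}, \eqref{eqH2l2} (whose natural endpoints are $\rho=0,\tfrac12,-1$) produces two competing bounds, and combining them through $\min(X,Y)\le X^{\lambda}Y^{1-\lambda}$ gives a one–parameter family $\|\tilde{\mathcal{E}}_{h}(z)A^{\rho}\|\le C h^{e(\lambda)}|z|^{\beta(\lambda)}$, $\lambda\in[0,1]$, with $e$ and $\beta$ both nondecreasing, $\beta(\lambda)\le\alpha-1<0$, and $e(1)=\min(2-2\rho,2)$.

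Inserting this family into $\Xi_{1},\Xi_{2}$ and using $\int_{\Gamma^{\tau}_{\theta,\kappa}}|e^{z(t_{n}-s)}||z|^{\gamma}\,|dz|\le C(t_{n}-s)^{-\gamma-1}$ for $\gamma>-1$ — valid uniformly in $\tau$ because $\kappa\le\pi/(t_{n}|\sin\theta|)$ — reduces the two factors to $\Xi_{1}^{2}\le C h^{2e(\lambda)}\int_{0}^{t_{n}}(t_{n}-s)^{\varrho-2\beta(\lambda)-2}\,ds$ and $\Xi_{2}^{2}\le C h^{2e(\lambda)}\int_{0}^{t_{n}}(t_{n}-s)^{-\varrho+2(2H-\beta(\lambda)-2)}\,ds$, where in $\Xi_{2}$ the factor $z^{1-2H}$ shifts $\beta(\lambda)$ to $\beta(\lambda)+1-2H<0$ (using $\rho\alpha<H$, automatic for $\rho\le\tfrac12$). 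The two $s$–integrals are simultaneously finite exactly when $2\beta(\lambda)+1<\varrho<4H-2\beta(\lambda)-3$, a nonempty window if and only if $\beta(\lambda)<H-1$; taking $\lambda$ as large as this constraint and $\lambda\le1$ allow and evaluating $e(\lambda)$ there yields precisely $e=\min(2-2\rho,\tfrac{2H}{\alpha}-2\rho,2)$ — the term $2-2\rho$ (resp. $2$) arising when the constraint $\beta(\lambda)<H-1$ is slack and $\rho\ge0$ (resp. $\rho\le0$, where \eqref{eqH2l2} is the best available estimate), and $\tfrac{2H}{\alpha}-2\rho$ arising when that constraint is binding. Hence $\Xi_{1}\Xi_{2}\le C h^{2\min(2-2\rho,\frac{2H}{\alpha}-2\rho,2)}$ and the claim follows.

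The main obstacle, as in Lemmas~\ref{lemtime1} and \ref{lemtime2}, is exactly this calibration: one must pick the weight $\varrho$ and the interpolation parameter $\lambda$ (and, if one insists on removing the arbitrarily small loss that appears at the borderline $\beta(\lambda)=H-1$, an auxiliary splitting parameter in the $|r-s|^{2H-2}$ convolution, as is done with $\sigma$ in Lemma~\ref{lemtime2}) so that every $s$–, $r$– and contour integral converges with a constant independent of $\tau$, and so that the three base estimates \eqref{eql2l21}/\eqref{eql2l22}, \eqref{eqH-2l2}, \eqref{eqH2l2} assemble into exactly the stated exponent; one also needs the routine reduction of the piecewise-constant average $\Psi(s)$ to $\mathcal{E}_{h}(t_{n}-s)$ up to constants. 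The non-emptiness of the admissible window for $\varrho$ is what forces the term $\tfrac{2H}{\alpha}$ and again reflects $H>\tfrac12$, whereas the ceiling $2$ is merely the a priori spatial order of $\mathcal{E}_{h}$.
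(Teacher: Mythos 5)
Your proposal follows essentially the same route as the paper's proof: the weighted Cauchy--Schwarz splitting of the double integral from Lemma \ref{eqcorlem} into two factors with weights $(t_{n}-s)^{\pm\varrho}$, the factorization through $A^{-\rho}Q^{1/2}$, interpolation of the spatial resolvent-error bounds \eqref{eql2l21}--\eqref{eqH2l2} with a free parameter (your $\lambda$, the paper's $\beta$), and the case split at $\rho=0$ selecting \eqref{eqH-2l2} versus \eqref{eqH2l2} as the second interpolation endpoint. The only cosmetic difference is that in the second factor the paper keeps the convolution $\int_{s}^{t_{n}}(\cdot)(r-s)^{2H-2}\,dr$ in the time domain rather than converting it to the $z^{1-2H}$ symbol, which leads to the same exponent bookkeeping.
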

\begin{proof}
	According to \eqref{eqspterrep0}, the Cauchy-Schwarz inequality, and Lemma \ref{eqcorlem}, we have
	\begin{equation*}
		\begin{aligned}
			&	\mathbb{E}\|u^{n}-u^{n}_{h}\|_{\mathbb{H}}^{2}\\
			\leq& CH(2H-1)\int_{0}^{t_{n}}\int_{0}^{t_{n}}\left \langle\frac{1}{\tau}\sum_{i=1}^{n}\chi_{(t_{i-1},t_{i}]}(s)\int_{t_{i-1}}^{t_{i}}\mathcal{E}_{h}(t_{n}-\xi)d\xi Q^{1/2},\right .\\
				&\qquad\qquad\left .\frac{1}{\tau}\sum_{i=1}^{n}\chi_{(t_{i-1},t_{i}]}(r)\int_{t_{i-1}}^{t_{i}}\mathcal{E}_{h}(t_{n}-\xi)d\xi Q^{1/2}\right  \rangle |r-s|^{2H-2}drds\\
				\leq& C\left(\int_{0}^{t_{n}} (t_{n}-s)^{\varrho}\left \|\frac{1}{\tau}\sum_{i=1}^{n}\chi_{(t_{i-1},t_{i}]}(s)\int_{t_{i-1}}^{t_{i}}\mathcal{E}_{h}(t_{n}-\xi)d\xi Q^{1/2}\right \|_{\mathcal{L}_{2}}^{2}ds\right)^{1/2}\\
				\cdot&\left(\int_{0}^{t_{n}}(t_{n}-s)^{-\varrho} \right .\\
				&\qquad\qquad\left .\cdot\left \|\int_{s}^{t_{n}}\frac{1}{\tau}\sum_{i=1}^{n}\chi_{(t_{i-1},t_{i}]}(r)\int_{t_{i-1}}^{t_{i}}\mathcal{E}_{h}(t_{n}-\xi)d\xi Q^{1/2}|r-s|^{2H-2}dr\right\|_{\mathcal{L}_{2}}^{2} ds\right)^{1/2}\\
				\leq& C\uppercase\expandafter{\romannumeral1}\cdot \uppercase\expandafter{\romannumeral2}.
		\end{aligned}
	\end{equation*}
	 Here we consider the case $\rho\in[0,1/2]$ first. Choosing $\varrho=-1+2\alpha+\epsilon$ and using Lemma \ref{Lemseriesest}, \eqref{eql2l21}, \eqref{eqH-2l2}, and interpolation property give
	\begin{equation*}
\begin{aligned}
\uppercase\expandafter{\romannumeral1}^{2}\leq&C\int_{0}^{t_{n}}(t_{n}-s)^{-1+2\alpha+\epsilon}\left\|\frac{1}{\tau}\sum_{i=1}^{n}\chi_{(t_{i-1},t_{i}]}(s)\int_{t_{i-1}}^{t_{i}}\mathcal{E}_{h}(t_{n}-\xi)d\xi Q^{1/2}\right\|^{2}_{\mathcal{L}_{2}}ds\\
\leq& C\int_{0}^{t_{n}}(t_{n}-s)^{-1+2\alpha+\epsilon}\left (\frac{1}{\tau}\sum_{i=1}^{n}\chi_{(t_{i-1},t_{i}]}(s)\int_{t_{i-1}}^{t_{i}}\int_{\Gamma^{\tau}_{\theta,\kappa}}|e^{z(t_{n}-\xi)}||\delta_{\tau}(e^{-z\tau})|^{\alpha-1}\right .\\
&\left.\cdot\left\|((\delta_{\tau}(e^{-z\tau}))^{\alpha}+A)^{-1}-(((\delta_{\tau}(e^{-z\tau}))^{\alpha}+A_{h})^{-1}P_{h})A^{\rho}\right\|\left |\frac{z\tau}{e^{z\tau}-1}\right ||dz|d\xi\right )^{2}ds\\
\leq& Ch^{2(2-2\rho)}\int_{0}^{t_{n}}(t_{n}-s)^{-1+2\alpha+\epsilon}\\
&\qquad\qquad\cdot\left (\frac{1}{\tau}\sum_{i=1}^{n}\chi_{(t_{i-1},t_{i}]}(s)\int_{t_{i-1}}^{t_{i}}\int_{\Gamma^{\tau}_{\theta,\kappa}}|e^{z(s-\xi)}||e^{z(t_{n}-s)}||z|^{\alpha-1}|dz|d\xi\right )^{2}ds\\
\leq& Ch^{2(2-2\rho)}\int_{0}^{t_{n}} (t_{n}-s)^{-1+\epsilon}ds.\\
\end{aligned}
\end{equation*}
Similarly, by Lemma \ref{Lemseriesest}, \eqref{eql2l21}, \eqref{eql2l22}, \eqref{eqH-2l2}, and interpolation property, there is
\begin{equation*}
\begin{aligned}
&\left\|\int_{s}^{t_{n}}\frac{1}{\tau}\sum_{i=1}^{n}\chi_{(t_{i-1},t_{i}]}(r)\int_{t_{i-1}}^{t_{i}}\mathcal{E}_{h}(t_{n}-\xi)d\xi Q^{1/2}|r-s|^{2H-2}dr\right\|^{2}_{\mathcal{L}_{2}}\\
\leq& C\left (\int_{s}^{t_{n}}\frac{1}{\tau}\sum_{i=1}^{n}\chi_{(t_{i-1},t_{i}]}(r)\int_{t_{i-1}}^{t_{i}}\int_{\Gamma_{\theta,\kappa}}|e^{z(t_{n}-\xi)}||\delta_{\tau}(e^{-z\tau})|^{\alpha-1}\left |\frac{z\tau}{e^{z\tau}-1}\right |\right .\\
&\cdot\left\|((\delta_{\tau}(e^{-z\tau}))^{\alpha}+A)^{-1}-((\delta_{\tau}(e^{-z\tau}))^{\alpha}+A_{h})^{-1}P_{h})A^{\rho}\right \||dz|d\xi|r-s|^{2H-2}dr\Bigg )^{2}\\
\leq& Ch^{2(2-2\rho)\beta}\left (\int_{s}^{t_{n}}\frac{1}{\tau}\sum_{i=1}^{n}\chi_{(t_{i-1},t_{i}]}(r)\int_{t_{i-1}}^{t_{i}}\int_{\Gamma_{\theta,\kappa}}|e^{z(t_{n}-\xi)}|\right .\\
&\qquad\qquad\qquad\qquad\qquad\qquad\qquad\cdot|z|^{\alpha-(1-\rho)(1-\beta)\alpha-1}|dz|d\xi|r-s|^{2H-2}dr\Bigg )^{2}\\
\leq& Ch^{2(2-2\rho)\beta}\left (\int_{s}^{t_{n}}(t_{n}-r)^{(1-\rho)(1-\beta)\alpha-\alpha}(r-s)^{2H-2}dr\right )^{2}.\\
\end{aligned}
\end{equation*}
Therefore
\begin{equation*}
\begin{aligned}
\uppercase\expandafter{\romannumeral2}^{2}
\leq& Ch^{2(2-2\rho)\beta}\int_{0}^{t_{n}}(t_{n}-s)^{1-2\alpha-\epsilon+4H+2\alpha(1-\rho)(1-\beta)-2\alpha-2}ds,
\end{aligned}
\end{equation*}
	where we need to require that $\beta\in [0,1]$ and
		$	1-2\alpha-\epsilon+4H+2\alpha(1-\rho)(1-\beta)-2\alpha-2>-1$,
	i.e., $(2-2\rho)\beta<2-2\rho-\frac{4\alpha-4H+\epsilon}{\alpha}$.
	
	When $\rho\in[-1,0)$, taking $\varrho=-1+2(\rho+1)\alpha+\epsilon$ and using Lemma \ref{Lemseriesest}, \eqref{eql2l21}, \eqref{eqH2l2}, and the interpolation property imply
	\begin{equation*}
	\begin{aligned}
	\uppercase\expandafter{\romannumeral1}^{2}\leq&C\int_{0}^{t_{n}}(t_{n}-s)^{-1+2(\rho+1)\alpha+\epsilon}\left\|\frac{1}{\tau}\sum_{i=1}^{n}\chi_{(t_{i-1},t_{i}]}(s)\int_{t_{i-1}}^{t_{i}}\mathcal{E}_{h}(t_{n}-\xi)d\xi Q^{1/2}\right\|^{2}_{\mathcal{L}_{2}}ds\\
	\leq& C\int_{0}^{t_{n}}(t_{n}-s)^{-1+2(\rho+1)\alpha+\epsilon}\left (\frac{1}{\tau}\sum_{i=1}^{n}\chi_{(t_{i-1},t_{i}]}(s)\int_{t_{i-1}}^{t_{i}}\int_{\Gamma^{\tau}_{\theta,\kappa}}|e^{z(t_{n}-\xi)}||\delta_{\tau}(e^{-z\tau})|^{\alpha-1}\right .\\
	&\left\|((\delta_{\tau}(e^{-z\tau}))^{\alpha}+A)^{-1}-((\delta_{\tau}(e^{-z\tau}))^{\alpha}+A_{h})^{-1}P_{h})A^{\rho}\right\|\left |\frac{z\tau}{e^{z\tau}-1}\right ||dz|d\xi\Bigg )^{2}ds\\
	\leq& Ch^{4}\int_{0}^{t_{n}}(t_{n}-s)^{-1+2(\rho+1)\alpha+\epsilon}\\
	&\qquad\cdot\left (\frac{1}{\tau}\sum_{i=1}^{n}\chi_{(t_{i-1},t_{i}]}(s)\int_{t_{i-1}}^{t_{i}}\int_{\Gamma^{\tau}_{\theta,\kappa}}|e^{z(s-\xi)}||e^{z(t_{n}-s)}||z|^{(\rho+1)\alpha-1}|dz|d\xi\right )^{2}ds\\
	\leq& Ch^{4}\int_{0}^{t_{n}}(t_{n}-s)^{-1+2(\rho+1)\alpha+\epsilon}\left (\int_{\Gamma^{\tau}_{\theta,\kappa}}|e^{z(t_{n}-s)}||z|^{(\rho+1)\alpha-1}|dz|\right )^{2}ds\\
	\leq& Ch^{4}\int_{0}^{t_{n}} (t_{n}-s)^{-1+\epsilon}ds.\\
	\end{aligned}
	\end{equation*}
	Similarly, by Lemma \ref{Lemseriesest}, \eqref{eql2l21}, \eqref{eqH2l2}, and the interpolation property, we obtain
	\begin{equation*}
	\begin{aligned}
	&\left\|\int_{s}^{t_{n}}\frac{1}{\tau}\sum_{i=1}^{n}\chi_{(t_{i-1},t_{i}]}(r)\int_{t_{i-1}}^{t_{i}}\mathcal{E}_{h}(t_{n}-\xi)d\xi Q^{1/2}|r-s|^{2H-2}dr\right\|^{2}_{\mathcal{L}_{2}}\\
	\leq& C\left (\int_{s}^{t_{n}}\frac{1}{\tau}\sum_{i=1}^{n}\chi_{(t_{i-1},t_{i}]}(r)\int_{t_{i-1}}^{t_{i}}\int_{\Gamma_{\theta,\kappa}}|e^{z(t_{n}-\xi)}||\delta_{\tau}(e^{-z\tau})|^{\alpha-1}\left |\frac{z\tau}{e^{z\tau}-1}\right |\right .\\
	&\cdot\left\|((\delta_{\tau}(e^{-z\tau}))^{\alpha}+A)^{-1}-((\delta_{\tau}(e^{-z\tau}))^{\alpha}+A_{h})^{-1}P_{h})A^{\rho}\right \||dz|d\xi|r-s|^{2H-2}dr\Bigg )^{2}\\
	\leq& Ch^{4(1+\rho)\beta-4\rho}\left (\int_{s}^{t_{n}}\frac{1}{\tau}\sum_{i=1}^{n}\chi_{(t_{i-1},t_{i}]}(r)\int_{t_{i-1}}^{t_{i}}\int_{\Gamma_{\theta,\kappa}}|e^{z(t_{n}-\xi)}|\right .\\
	&\qquad\qquad\qquad\qquad\qquad\qquad\cdot|z|^{\alpha+\alpha\rho-(1+\rho)\alpha(1-\beta)-1}|dz|d\xi|r-s|^{2H-2}dr\Bigg )^{2}.\\
	\end{aligned}
	\end{equation*}
	So
	\begin{equation*}
	\begin{aligned}
	\uppercase\expandafter{\romannumeral2}^{2}\leq& Ch^{4(1+\rho)\beta-4\rho}\int_{0}^{t_{n}}(t_{n}-s)^{1-2(\rho+1)\alpha-\epsilon}\left (\int_{s}^{t_{n}}(t_{n}-r)^{-(1+\rho)\alpha\beta}(r-s)^{2H-2}dr\right )^{2}ds\\
	\leq& Ch^{4(1+\rho)\beta-4\rho}\int_{0}^{t_{n}}(t_{n}-s)^{1-2(\rho+1)\alpha-\epsilon+4H-2(1+\rho)\beta\alpha-2}ds,\\
	\end{aligned}
	\end{equation*}
	where we need to require that $1-2(\rho+1)\alpha-\epsilon+4H-2\beta\alpha(1+\rho)-2>-1$, i.e., $2\beta(1+\rho)<\frac{4H-\epsilon}{\alpha}-2(1+\rho)$
	and $\beta\in [0,1]$. Combining above estimates for $\uppercase\expandafter{\romannumeral1}$ and $\uppercase\expandafter{\romannumeral2}$, the desired result is reached.
\end{proof}

Similarly,  using the same way to discretize Eq. \eqref{equretosol1} leads to
\begin{equation}\label{eqsemischeme1}
\left \{
\begin{aligned}
&\left(\frac{v^{n}_h-v^{n-1}_{h}}{\tau},\nu_{h}\right)+\sum_{i=0}^{n-1}d^{(1-\alpha)}_{i}(A_hv^{n-i}_h,\nu_{h})=0\quad \forall \nu_{h}\in X_h,
\\
&v^{0}_h=P_{h}G_{0}. 
\\
\end{aligned}
\right.
\end{equation}
 Combining Theorem \ref{thmuspace} and the results provided in \cite{jin2016-1}, we have the estimate for $G^{n}$.
\begin{theorem}\label{ThmGspace}
	Let $G^{n}=u^{n}+v^{n}$ and $G^{n}_{h}=u^{n}_{h}+v^{n}_{h}$, where $u^{n}$ and $v^{n}$ are the solutions of Eqs. \eqref{eqfullscheme} and \eqref{eqfullscheme1} and $u^{n}_{h}$ and $v^{n}_{h}$ are the solutions of Eqs. \eqref{eqsemischeme} and \eqref{eqsemischeme1}, respectively. Let $\|A^{-\rho}\|_{\mathcal{L}^{0}_{2}}<\infty$ with $\rho\in[-1,1/2]$. Assume $G_{0}\in\hat{H}^{q} (D) $ with $q\leq 2$. Then we have
	\begin{equation*}
	(\mathbb{E}\|G^{n}-G^{n}_{h}\|_{\mathbb{H}}^{2})^{1/2}\leq Ch^{\min(2,2-2\rho,\frac{H}{\alpha}-2\rho)}+Ct^{q\alpha/2-1}h^{2}\|G_{0}\|_{\hat{H}^{q} (D)}.
	\end{equation*}
\end{theorem}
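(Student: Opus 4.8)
The plan is to use the linear splitting $G^{n}-G^{n}_{h}=(u^{n}-u^{n}_{h})+(v^{n}-v^{n}_{h})$ and to estimate the stochastic and the deterministic contributions separately. First I would apply the triangle inequality in $L^{2}(\Omega;\mathbb{H})$; since $v^{n}$ and $v^{n}_{h}$ are deterministic this gives
\begin{equation*}
(\mathbb{E}\|G^{n}-G^{n}_{h}\|_{\mathbb{H}}^{2})^{1/2}\leq(\mathbb{E}\|u^{n}-u^{n}_{h}\|_{\mathbb{H}}^{2})^{1/2}+\|v^{n}-v^{n}_{h}\|_{\mathbb{H}},
\end{equation*}
so the two pieces can be bounded independently.

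For the stochastic part essentially all the work is already done: Theorem \ref{thmuspace} yields, for $\rho\in[-1,1/2]$, the bound $(\mathbb{E}\|u^{n}-u^{n}_{h}\|_{\mathbb{H}}^{2})^{1/2}\leq Ch^{\min(2-2\rho,\frac{2H}{\alpha}-2\rho,2)}$. Since $\frac{2H}{\alpha}-2\rho\geq\frac{H}{\alpha}-2\rho$ and one may assume $h\leq 1$, this is dominated by $Ch^{\min(2,2-2\rho,\frac{H}{\alpha}-2\rho)}$, which is exactly the first term of the claimed estimate, and no further argument is needed.

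For the deterministic part I note that $v^{n}$ solves the backward Euler convolution quadrature scheme \eqref{eqfullscheme1} with $v^{0}=G_{0}$, while $v^{n}_{h}$ is its finite element discretization \eqref{eqsemischeme1} with $v^{0}_{h}=P_{h}G_{0}$; hence $v^{n}-v^{n}_{h}$ is precisely the space discretization error of the time-discrete deterministic problem, a purely deterministic quantity. The sharp nonsmooth-data estimate $\|v^{n}-v^{n}_{h}\|_{\mathbb{H}}\leq Ct_{n}^{q\alpha/2-1}h^{2}\|G_{0}\|_{\hat{H}^{q}(D)}$ for $0\leq q\leq 2$ is available from the analysis of the backward Euler convolution quadrature finite element method in \cite{jin2016-1}; alternatively it can be reproduced directly by writing $v^{n}$ and $v^{n}_{h}$ as contour integrals over $\Gamma^{\tau}_{\theta,\kappa}$, applying the error operator $\mathcal{E}_{h}$ to the data $G_{0}\in\hat{H}^{q}(D)$, and using the resolvent-type bounds \eqref{eql2l21}--\eqref{eqH2l2} together with Lemma \ref{Lemseriesest}. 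Adding this to the stochastic contribution gives the stated bound. The only point requiring care is that the cited result must control the gap between the \emph{time-discrete} and the \emph{fully discrete} solutions uniformly down to $t_{n}\to 0$ — this is the source of the weight $t_{n}^{q\alpha/2-1}$ — but, unlike the stochastic term, this involves no interaction with the fractional Gaussian noise, so it is the easier of the two pieces and amounts to quoting standard deterministic theory.
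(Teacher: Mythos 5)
Your proposal is correct and follows essentially the same route as the paper: the paper's entire proof consists of combining Theorem \ref{thmuspace} for the stochastic component $u^{n}-u^{n}_{h}$ with the deterministic nonsmooth-data estimate for $v^{n}-v^{n}_{h}$ from \cite{jin2016-1}. You merely make explicit the triangle inequality in $L^{2}(\Omega;\mathbb{H})$ and the (correct) observation that the exponent $\min(2-2\rho,\frac{2H}{\alpha}-2\rho,2)$ from Theorem \ref{thmuspace} dominates the stated $\min(2,2-2\rho,\frac{H}{\alpha}-2\rho)$ for $h\leq1$, both of which the paper leaves implicit.
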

\section{Numerical experiments}
In this section, we perform numerical experiments to verify the effectiveness of the numerical schemes.
 Here we consider the covariance operator $Q$ that shares the eigenfunctions with the operator $A$ and denote the eigenvalues of $Q$ as $\varLambda_{k}=k^{m}$, $k=1,2,\cdots$. To get the relationship between $\rho$ and $m$, a theorem about the eigenvalues of $A$ is needed.
 \begin{theorem}[\cite{laptev_1997,Peter1983}]\label{thmeigenvalue}
 	Let $D$ denote a bounded domain in $\mathbb{R}^d \,(d=1,2,3)$ and $\varkappa_j$ be the $j$-th eigenvalue of the Dirichlet boundary problem for the Laplacian operator $-\Delta$ in $D$. Denote $|D|$ as the volume of $D$. We have that
 	\begin{equation*}
 	\varkappa_{j} \geq \frac{C_{d} d}{d+2} j^{2 / d}|D|^{-2 / d}
 	\end{equation*}
 	for all $j\geq 1$, where $C_{d}=(2 \pi)^{2} B_{d}^{-2 / d}$ and $B_d$ denotes the volume of the unit $d$-dimensional ball.
 \end{theorem}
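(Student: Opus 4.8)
The statement is the Berezin--Li--Yau lower bound together with the monotonicity of the eigenvalue sequence, so the plan is to reproduce that classical argument. Let $\{\varphi_j\}_{j\ge1}$ be the $L^2(D)$-orthonormal Dirichlet eigenfunctions of $-\Delta$, extended by zero to all of $\mathbb{R}^d$; because $\varphi_j\in H_0^1(D)$ these extensions lie in $H^1(\mathbb{R}^d)$, which is what makes the spectral identities below legitimate. Fix $k\ge1$, write $\widehat{\varphi}_j(\xi)=(2\pi)^{-d/2}\int_D\varphi_j(x)e^{-\mathbf{i}x\cdot\xi}\,dx$, and set $f(\xi)=\sum_{j=1}^k|\widehat{\varphi}_j(\xi)|^2$.

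First I would record three facts about $f$. (i) For each $\xi$, $f(\xi)=\sum_{j=1}^k\big|\big(\varphi_j,(2\pi)^{-d/2}e^{\mathbf{i}(\cdot)\cdot\xi}\chi_D\big)_{L^2}\big|^2\le\big\|(2\pi)^{-d/2}e^{\mathbf{i}(\cdot)\cdot\xi}\chi_D\big\|_{L^2}^2=(2\pi)^{-d}|D|=:M$ by Bessel's inequality. (ii) By Plancherel, $\int_{\mathbb{R}^d}f(\xi)\,d\xi=\sum_{j=1}^k\|\varphi_j\|_{L^2}^2=k$. (iii) By Plancherel and $\|\nabla\varphi_j\|_{L^2}^2=\varkappa_j$, one has $\int_{\mathbb{R}^d}|\xi|^2f(\xi)\,d\xi=\sum_{j=1}^k\|\nabla\varphi_j\|_{L^2}^2=\sum_{j=1}^k\varkappa_j$.

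The core step is a ``bathtub'' rearrangement: among all measurable $g$ with $0\le g\le M$ and $\int_{\mathbb{R}^d}g=k$, the functional $g\mapsto\int_{\mathbb{R}^d}|\xi|^2g(\xi)\,d\xi$ is minimized by $M\chi_{B_R}$, where $B_R$ is the ball centered at the origin with $M|B_R|=k$, i.e.\ $MB_dR^d=k$. Applying this to $f$ and using (i)--(iii),
\begin{equation*}
\sum_{j=1}^k\varkappa_j\ \ge\ M\int_{B_R}|\xi|^2\,d\xi\ =\ \frac{M\,d\,B_d}{d+2}R^{d+2}.
\end{equation*}
Substituting $R^d=k/(MB_d)$ and $M=(2\pi)^{-d}|D|$ and simplifying yields the Berezin--Li--Yau inequality
\begin{equation*}
\sum_{j=1}^k\varkappa_j\ \ge\ \frac{d}{d+2}\,C_d\,k^{1+2/d}\,|D|^{-2/d},\qquad C_d=(2\pi)^2B_d^{-2/d}.
\end{equation*}
Finally, since the $\varkappa_j$ are ordered non-decreasingly, $\varkappa_k\ge\frac1k\sum_{j=1}^k\varkappa_j$; dividing the previous display by $k$ then gives $\varkappa_k\ge\frac{C_dd}{d+2}k^{2/d}|D|^{-2/d}$, and since $k\ge1$ was arbitrary this is the claim (with $k$ renamed to $j$).

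The main obstacle is justifying the bathtub step, i.e.\ that the constrained minimizer of $\int|\xi|^2g$ is the scaled indicator of a centered ball. This is elementary but must be done with care: writing $h=f-M\chi_{B_R}$, one has $h\le0$ on $B_R$ (where $|\xi|^2\le R^2$) and $h\ge0$ on the complement (where $|\xi|^2\ge R^2$), while $\int h=0$ by (ii); hence $\int|\xi|^2h\ge R^2\int h=0$, which is exactly the desired inequality. The rest is bookkeeping: tracking the Fourier normalization so that the constant $(2\pi)^2$ appears, and recalling why the zero-extension of a Dirichlet eigenfunction remains in $H^1(\mathbb{R}^d)$, which is what underpins identity (iii).
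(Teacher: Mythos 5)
Your proposal is correct: it is the standard Li--Yau proof of the Berezin--Li--Yau bound (Bessel bound on the spectral density of the zero-extended eigenfunctions, Plancherel for the mass and Dirichlet energy, the bathtub rearrangement, and division by $k$ using monotonicity), and every step, including the constant bookkeeping that produces $C_d=(2\pi)^2B_d^{-2/d}$, checks out. The paper itself gives no proof --- it quotes this theorem from the cited references \cite{laptev_1997,Peter1983} --- and your argument is precisely the one found there, so this is essentially the same approach as the paper's (delegated) proof.
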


 By the assumption $\|A^{-\rho}\|_{\mathcal{L}^{0}_{2}}<\infty$ and Theorem \ref{thmeigenvalue}, we get that $\rho>\frac{1+m}{4}d$. In our numerical experiments, we take the domain $D=(0,1)$. Thus the eigenfunctions of $Q$ are
 \begin{equation*}
 	\phi_{k}(x)=\sqrt{2}\sin(k\pi x).
 \end{equation*}
 We take
\begin{equation*}
W^{H}_{Q}(x,t)=\sum_{k=1}^{1000}\sqrt{\varLambda_{k}}\phi_k(x)W^{H}_k(t)
\end{equation*}
 and $100$ trajectories to calculate the solution of Eq. $\eqref{equretosol0}$ at time $T=0.01$. Since the exact solution $G$ is unknown, we use
\begin{equation*}
\begin{aligned}
&e_{h}=\left (\frac{1}{100}\sum_{i=1}^{100}\|G^{N}_{h}(\omega_{i})-G^{N}_{h/2}(\omega_{i})\|^{2}_{\mathbb{H}}\right )^{1/2},\\
&e_{\tau}=\left (\frac{1}{100}\sum_{i=1}^{100}\|G_{\tau}(\omega_{i})-G_{\tau/2}(\omega_{i})\|^{2}_{\mathbb{H}}\right )^{1/2}
\end{aligned}
\end{equation*}
to measure the spatial errors and temporal errors, where $G^{N}_{h}(\omega_{i})$ ($G_{\tau}(\omega_{i})$) means the numerical solution of $G$ at $t_N$ with mesh size $h$ (step size $\tau$) and trajectory $\omega_{j}$; and the spatial and temporal convergence rates can be, respectively, calculated by
\begin{equation*}
{\rm Rate}=\frac{\ln(e_{h}/e_{h/2})}{\ln(2)},\quad {\rm Rate}=\frac{\ln(e_{\tau}/e_{\tau/2})}{\ln(2)}.
\end{equation*}

\begin{example}
In this example, we take $G_{0}=0$ and $h=1/100$. The numerical results  for different $\alpha$, $H$, and $m$ are provided in Tables \ref{tab:tm0}, \ref{tab:tm-05}, and \ref{tab:tm-1}, where the numbers in the bracket in the last column denote the theoretical rates predicted by Theorem \ref{thmGtime}. When $m=0$, $\dot{W}^{H}_{Q}$ becomes a fractional Gaussian white noise and $\rho>\frac{1}{4}$ in one-dimensional domain, which leads to $\mathcal{O}(\tau^{H-\alpha/4})$ convergence in time and all results provided in Table \ref{tab:tm0} are consistent with the predicted results. When we improve regularity of the noise, i.e., $m=-0.5$ and $m=-1$, the convergence rates provided in Tables \ref{tab:tm-05} and \ref{tab:tm-1} are also improved and agree with the results provided in Theorem \ref{thmGtime}.
\begin{table}[htbp]
	\caption{Temporal errors and convergence rates with $m=0$.}
	\begin{tabular}{|c|c|cccc|c|}
		\hline
	$H$ & $\alpha\backslash T/\tau$ & 32 & 64 & 128 & 256 & Rate  \\
		\hline
	& 0.3 & 4.093E-04 & 2.698E-04 & 1.926E-04 & 1.303E-04 & $\approx$ 0.5503 (0.5250) \\
	0.6 & 0.5 & 9.714E-04 & 7.418E-04 & 5.304E-04 & 3.811E-04 & $\approx$ 0.4500 (0.4750) \\
	& 0.8 & 3.116E-03 & 2.363E-03 & 1.723E-03 & 1.313E-03 & $\approx$ 0.4154 (0.4000) \\
		\hline
	& 0.3 & 8.907E-05 & 5.627E-05 & 3.869E-05 & 2.212E-05 & $\approx$ 0.6698 (0.6750) \\
	0.75 & 0.5 & 2.260E-04 & 1.522E-04 & 9.789E-05 & 6.428E-05 & $\approx$ 0.6046 (0.6250) \\
	& 0.8 & 7.213E-04 & 4.945E-04 & 3.332E-04 & 2.218E-04 & $\approx$ 0.5670 (0.5500) \\
		\hline
	& 0.3 & 1.919E-05 & 1.214E-05 & 6.537E-06 & 3.653E-06 & $\approx$ 0.7978 (0.8250) \\
	0.9 & 0.5 & 4.857E-05 & 2.778E-05 & 1.651E-05 & 9.619E-06 & $\approx$ 0.7787 (0.7750) \\
	& 0.8 & 1.487E-04 & 8.969E-05 & 5.559E-05 & 3.345E-05 & $\approx$ 0.7174 (0.7000) \\
		\hline
	\end{tabular}
	\label{tab:tm0}
\end{table}

\begin{table}[htbp]
	\caption{Temporal errors and convergence rates with $m=-0.5$.}
	\begin{tabular}{|c|c|cccc|c|}
		\hline
		$H$ & $\alpha\backslash T/\tau$ & 32 & 64 & 128 & 256 & Rate  \\
		\hline
		& 0.3 & 3.781E-04 & 2.426E-04 & 1.679E-04 & 1.088E-04 & $\approx$ 0.5989 (0.5625) \\
		0.6 & 0.5 & 7.895E-04 & 5.241E-04 & 3.716E-04 & 2.638E-04 & $\approx$ 0.5271 (0.5375) \\
		& 0.8 & 1.714E-03 & 1.199E-03 & 8.417E-04 & 5.829E-04 & $\approx$ 0.5187 (0.5000) \\
		\hline
		& 0.3 & 8.254E-05 & 5.370E-05 & 3.166E-05 & 1.906E-05 & $\approx$ 0.7049 (0.7125) \\
		0.75 & 0.5 & 1.824E-04 & 1.158E-04 & 7.161E-05 & 4.355E-05 & $\approx$ 0.6887 (0.6875) \\
		& 0.8 & 4.124E-04 & 2.613E-04 & 1.663E-04 & 1.041E-04 & $\approx$ 0.6623 (0.6500) \\
		\hline
		& 0.3 & 1.716E-05 & 9.468E-06 & 5.379E-06 & 2.839E-06 & $\approx$ 0.8654 (0.8625) \\
		0.9 & 0.5 & 4.064E-05 & 2.127E-05 & 1.193E-05 & 6.746E-06 & $\approx$ 0.8636 (0.8375) \\
		& 0.8 & 8.630E-05 & 4.947E-05 & 2.861E-05 & 1.636E-05 & $\approx$ 0.7997 (0.8000) \\
		\hline
	\end{tabular}
	\label{tab:tm-05}
\end{table}

\begin{table}[htbp]
	\caption{Temporal errors and convergence rates with $m=-1$.}
	\begin{tabular}{|c|c|cccc|c|}
		\hline
		$H$ & $\alpha\backslash T/\tau$ & 32 & 64 & 128 & 256 & Rate  \\
		\hline
		& 0.3 & 3.637E-04 & 2.346E-04 & 1.565E-04 & 1.029E-04 & $\approx$ 0.6071 (0.6000) \\
		0.6 & 0.5 & 5.980E-04 & 3.949E-04 & 2.567E-04 & 1.684E-04 & $\approx$ 0.6093 (0.6000) \\
		& 0.8 & 9.719E-04 & 6.278E-04 & 4.184E-04 & 2.779E-04 & $\approx$ 0.6021 (0.6000) \\
		\hline
		& 0.3 & 8.365E-05 & 4.883E-05 & 3.111E-05 & 1.764E-05 & $\approx$ 0.7484 (0.7500) \\
		0.75 & 0.5 & 1.401E-04 & 8.730E-05 & 5.062E-05 & 3.131E-05 & $\approx$ 0.7207 (0.7500) \\
		& 0.8 & 2.442E-04 & 1.413E-04 & 8.307E-05 & 4.982E-05 & $\approx$ 0.7644 (0.7500) \\
		\hline
		& 0.3 & 1.916E-05 & 9.455E-06 & 5.350E-06 & 2.908E-06 & $\approx$ 0.9068 (0.9000) \\
		0.9 & 0.5 & 3.436E-05 & 1.787E-05 & 9.471E-06 & 5.105E-06 & $\approx$ 0.9168 (0.9000) \\
		& 0.8 & 5.986E-05 & 3.206E-05 & 1.712E-05 & 9.263E-06 & $\approx$ 0.8973 (0.9000) \\
		\hline
	\end{tabular}
	\label{tab:tm-1}
\end{table}

\end{example}

\begin{example}
	In this example, we take $G_{0}=x(1-x)$ and $\tau=T/1024$. The numerical results  for different $\alpha$, $H$, and $m$ are provided in Tables \ref{tab:sm0}, \ref{tab:sm-05}, \ref{tab:sm-1}, and \ref{tab:sm-15}.  In the tables, the numbers in the bracket in the last column denote the theoretical rates predicted by Theorem \ref{ThmGspace}. When $m=0$,  $m=-0.5$,  and $m=-1$, we have $\rho\geq 0$ and the convergence rates provided in Tables \ref{tab:sm0}, \ref{tab:sm-05}, and \ref{tab:sm-1} agree with Theorem \ref{ThmGspace}. Furthermore, to better show the effect of $\rho$ on the convergence rates, we take $m=-1.5$ and $\alpha\geq H=0.6$.  Thus $\rho>-1/4$ and the predicted convergence rates are $\mathcal{O}(h^{\frac{2H}{\alpha}-2\rho})$ according to Theorem \ref{ThmGspace}; the numerical results are presented in Table \ref{tab:sm-15}, which  agree with the predicted ones.
	\begin{table}[htbp]
		\caption{Spatial errors and convergence rates with $m=0$.}
		\begin{tabular}{|c|c|cccc|c|}
			\hline
			$H$ & $\alpha\backslash1/h$ & 16 & 32 & 64 & 128 & Rate \\
				\hline
			& 0.3 & 2.683E-04 & 9.078E-05 & 3.334E-05 & 1.183E-05 & $\approx$ 1.5012 (1.5000) \\
			0.6 & 0.5 & 7.776E-04 & 2.729E-04 & 1.029E-04 & 3.650E-05 & $\approx$ 1.4710 (1.5000) \\
			& 0.8 & 4.598E-03 & 2.211E-03 & 9.564E-04 & 3.839E-04 & $\approx$ 1.1941 (1.0000) \\
				\hline
			& 0.3 & 1.426E-04 & 4.907E-05 & 1.638E-05 & 5.801E-06 & $\approx$ 1.5399 (1.5000) \\
			0.75 & 0.5 & 3.358E-04 & 1.219E-04 & 4.359E-05 & 1.554E-05 & $\approx$ 1.4778 (1.5000) \\
			& 0.8 & 1.528E-03 & 6.483E-04 & 2.536E-04 & 9.527E-05 & $\approx$ 1.3346 (1.3750) \\
				\hline
			& 0.3 & 9.642E-05 & 2.877E-05 & 9.194E-06 & 3.038E-06 & $\approx$ 1.6627 (1.5000) \\
			0.9 & 0.5 & 1.956E-04 & 6.457E-05 & 2.173E-05 & 7.588E-06 & $\approx$ 1.5627 (1.5000) \\
			& 0.8 & 6.464E-04 & 2.509E-04 & 8.918E-05 & 3.196E-05 & $\approx$ 1.4460 (1.5000) \\
				\hline
		\end{tabular}
		\label{tab:sm0}
	\end{table}
	\begin{table}[htbp]
		\caption{Spatial errors and convergence rates with $m=-0.5$.}
		\begin{tabular}{|c|c|cccc|c|}
			\hline
			$H$ & $\alpha\backslash1/h$ & 16 & 32 & 64 & 128 & Rate \\
			\hline
			& 0.3 & 1.659E-04 & 4.954E-05 & 1.495E-05 & 4.427E-06 & $\approx$ 1.7426 (1.7500) \\
			0.6 & 0.5 & 4.426E-04 & 1.389E-04 & 4.235E-05 & 1.314E-05 & $\approx$ 1.6915 (1.7500) \\
			& 0.8 & 2.306E-03 & 9.030E-04 & 3.459E-04 & 1.244E-04 & $\approx$ 1.4042 (1.2500) \\
			\hline
			& 0.3 & 1.028E-04 & 2.927E-05 & 8.335E-06 & 2.382E-06 & $\approx$ 1.8104 (1.7500) \\
			0.75 & 0.5 & 2.244E-04 & 6.636E-05 & 1.981E-05 & 5.879E-06 & $\approx$ 1.7514 (1.7500) \\
			& 0.8 & 7.972E-04 & 2.860E-04 & 9.834E-05 & 3.297E-05 & $\approx$ 1.5319 (1.6250) \\
			\hline
			& 0.3 & 8.392E-05 & 2.169E-05 & 5.793E-06 & 1.563E-06 & $\approx$ 1.9156 (1.7500) \\
			0.9 & 0.5 & 1.492E-04 & 4.063E-05 & 1.138E-05 & 3.229E-06 & $\approx$ 1.8435 (1.7500) \\
			& 0.8 & 3.862E-04 & 1.216E-04 & 3.789E-05 & 1.176E-05 & $\approx$ 1.6791 (1.7500) \\
			\hline
		\end{tabular}
		\label{tab:sm-05}
	\end{table}

	\begin{table}[htbp]
		\caption{Spatial errors and convergence rates with $m=-1$.}
		\begin{tabular}{|c|c|cccc|c|}
			\hline
			$H$ & $\alpha\backslash1/h$ & 16 & 32 & 64 & 128 & Rate \\
			\hline
			& 0.3 & 1.220E-04 & 3.244E-05 & 8.596E-06 & 2.271E-06 & $\approx$ 1.9157 (2.0000) \\
			0.6 & 0.5 & 2.645E-04 & 7.669E-05 & 2.109E-05 & 5.764E-06 & $\approx$ 1.8401 (2.0000) \\
			& 0.8 & 1.262E-03 & 4.290E-04 & 1.413E-04 & 4.361E-05 & $\approx$ 1.6182 (1.5000) \\
			\hline
			& 0.3 & 8.428E-05 & 2.191E-05 & 5.663E-06 & 1.461E-06 & $\approx$ 1.9501 (2.0000) \\
			0.75 & 0.5 & 1.601E-04 & 4.293E-05 & 1.140E-05 & 3.013E-06 & $\approx$ 1.9106 (2.0000) \\
			& 0.8 & 4.800E-04 & 1.513E-04 & 4.464E-05 & 1.301E-05 & $\approx$ 1.7351 (1.8750) \\
			\hline
			& 0.3 & 7.896E-05 & 1.984E-05 & 5.008E-06 & 1.261E-06 & $\approx$ 1.9896 (2.0000) \\
			0.9 & 0.5 & 1.297E-04 & 3.310E-05 & 8.474E-06 & 2.172E-06 & $\approx$ 1.9669 (2.0000) \\
			& 0.8 & 2.442E-04 & 6.775E-05 & 1.880E-05 & 5.123E-06 & $\approx$ 1.8584 (2.0000) \\
			\hline
		\end{tabular}
		\label{tab:sm-1}
	\end{table}

\begin{table}[htbp]
	\caption{Spatial errors and convergence rates with $m=-1.5$.}
	\begin{tabular}{|c|c|cccc|c|}
		\hline
		$H$ & $\alpha\backslash1/h$ & 16 & 32 & 64 & 128 & Rate \\
		\hline
		& 0.7 & 4.402E-04 & 1.279E-04 & 3.444E-05 & 9.222E-06 & $\approx$ 1.8589(1.9643) \\
		0.6 & 0.8 & 6.860E-04 & 2.119E-04 & 6.128E-05 & 1.720E-05 & $\approx$ 1.7727(1.7500) \\
		& 0.9 & 1.062E-03 & 3.714E-04 & 1.177E-04 & 3.513E-05 & $\approx$ 1.6395(1.5833) \\
		\hline
	\end{tabular}
	\label{tab:sm-15}
\end{table}

\end{example}

\section{Conclusions}

We discuss the numerical methods for the stochastic fractional partial differential equation driven by external fractional Gaussian noise with the property of long-range dependence and positive correlation. The fully discrete scheme with the finite element approximation in space and the backward Euler convolution quadrature in time is developed. While analyzing the relationship between the regularity of the noise and convergence rates, we also provide the rigorous error analyses, verified by extensive numerical experiments.


\bibliographystyle{siamplain}
\bibliography{references}

\end{document}